\newtheorem{assumption} {Assumption}
\newtheorem{theorem} {Theorem}
\newtheorem{lemma} {Lemma}
\newtheorem{definition} {Definition}
\newtheorem{corollary} {Corollary}
\def\x{{\mathbf{x}}}
\def\u{{\mathbf{u}}}
\def\v{{\mathbf{v}}}
\def\z{{\mathbf{z}}}
\def\w{{\mathbf{w}}}
\def\y{{\mathbf{y}}}
\def\p{{\mathbf{p}}}
\def\b{{\mathbf{b}}}
\def\A{{\mathbf{A}}}
\def\I{{\mathbf{I}}}
\def\H{{\mathbf{H}}}
\def\x{{\mathbf{x}}}
\def\V{{\mathbf{V}}}
\def\U{{\mathbf{U}}}
\newcommand{\phir}{\phi^r}
\newcommand{\mL}{\mathcal{L}}
\newcommand{\mA}{\mathcal{A}}
\newcommand{\R}{\mathcal{R}}
\newcommand{\mV}{\mathcal{V}}
\newcommand{\mP}{\mathcal{P}}
\newcommand{\mF}{\mathcal{F}}
\newcommand{\mK}{\mathcal{K}}
\newcommand{\mM}{\mathcal{M}}
\newcommand{\E}{\mathbb{E}}
\newcommand{\dom}{\textrm{dom}}
\newcommand{\diag}{\textrm{diag}}
\newcommand{\nnz}{\textrm{nnz}}
\newcommand{\rank}{\textrm{rank}}
\newcommand{\reals}{\mathbb{R}}
\title{Weak Proximal Newton Oracles  \\for  Composite  Convex Optimization}
\date{}
\author{Dan Garber \\ \small{dangar@technion.ac.il} \\ \small{Faculty of Data and Decision Sciences} \\ \small{Technion - Israel Institute of Technology}}
\begin{document}

 \maketitle
 
\begin{abstract}
Second-order methods are of great importance for composite convex optimization problems due to their local super-linear convergence rates (under appropriate assumptions). However, the presence of even a simple nonsmooth function in the model most often renders the subproblems in proximal Newton methods computationally difficult to solve in high dimensions. We introduce a novel approach based on a \textit{weak proximal Newton oracle} (WPNO), which only requires solving such subproblems to accuracy that is comparable to that of the \emph{optimal solution of the global problem}, while maintaining local super-linear convergence under standard assumptions. Mainly, unlike classical inexact proximal Newton schemes, the complexity of our WPNO is not tied to (approximately) minimizing each subproblem; instead, we establish that when the optimal solution of the global problem admits a sparse structure, the inner subproblem can be solved by specialized first-order methods whose cost scales directly with the sparsity of this solution rather than with the ambient dimension.
\end{abstract} 
 
\section{Introduction}
We consider the following standard composite convex optimization problem:
\begin{align}\label{eq:optprob}
\min_{\x\in\E}\{F(\x) := f(\x) + \R(\x)\}, 
\end{align}
where $\E$ is a Euclidean vector space of dimension $n$, $\R:\E\rightarrow(-\infty,\infty]$ is a proper closed convex function, and $f(\cdot)$ is convex and twice continuously differentiable over $\dom(\R)$. We let $\beta_2$ denote the Lipschitz continuity parameter of the Hessian of $f$,  i.e., $\Vert{\nabla^2f(\x) - \nabla^2f(\y)}\Vert \leq \beta_2\Vert{\x-\y}\Vert$ for all $\x,\y\in\dom(\R)$, where  $\Vert{\cdot}\Vert$ denotes the Euclidean norm. Throughout we shall assume that Problem \eqref{eq:optprob} is bounded from below and we shall denote the optimal value by $F^*$. Throughout we shall also assume either one of the following two standard assumptions holds true.

\begin{assumption}\label{ass:sc}
Given an initialization point $\x_1\in\dom(\R)$, there exists positive scalars $\alpha,\beta$ such that for every $\x$ in the initial level set $\mL_1:=\{\x\in\E~|~F(\x) \leq F(\x_1)\}$ it holds that $\beta\I \succeq \nabla{}^2f(\x) \succeq \alpha\I$. 
\end{assumption}
Note Assumption \ref{ass:sc} implies that there is  a unique minimizer $\x^*$ to Problem \eqref{eq:optprob}.

\begin{assumption}\label{ass:qg}
The function $\R$ has bounded domain with Euclidean diameter $D < \infty$, and the quadratic growth property holds with some constant $\alpha >0$, i.e., 
\begin{align*}
\forall \x\in\dom(\R): \quad \Vert{\x-\x^*}\Vert^2 \leq \frac{2}{\alpha}\left({F(\x) - F^*}\right),
\end{align*}
where $\x^*$ is the unique minimizer of Problem \eqref{eq:optprob}\footnote{since our derivations will be often interesting when $\R$ is a sparsity-promoting function, and when the optimal solutions are indeed sparse in the appropriate sense, the assumption here that $\x^*$ is unique is quite plausible. Nevertheless, this assumption is virtually without loss of generality, as the extension of the analyzes to handle multiple solutions is quite straightforward}.
\end{assumption}

In this paper we will be interested in efficient second-order methods for Problem \eqref{eq:optprob} with local super-linear convergence rates (under either Assumption \ref{ass:sc} or \ref{ass:qg}), which could be highly beneficial, at least at a certain proximity of the optimal solution, when Problem \eqref{eq:optprob} is ill-conditioned for first-order methods in the sense that the first-order condition number $\beta/\alpha$ is very large, where $\beta$ denotes the Lipschitz continuity parameter of $\nabla{}f$. 

The literature on second-order methods for convex optimization is of course huge, and we do not presume to thoroughly survey it here. Instead we refer the interested reader to \cite{lee2014proximal, patriksson1998cost, nesterov2006cubic, doikov2020convex, nesterov2008accelerating,  byrd2016inexact, li2017inexact, schmidt201211, boyd2004convex} and references therein.

Standard Newton methods, when applied to Problem \eqref{eq:optprob}, often require on each iteration $t$ to solve a subproblem of the form:
\begin{align}\label{eq:newtonStep}
\min_{\w\in\E}\{\Psi_t(\w) := \langle{\w-\x_t, \nabla{}f(\x_t)}\rangle + \frac{\eta}{2}\langle{\w-\x_t, \nabla^2f(\x_t)(\w-\x_t)}\rangle + \R(\w)\},
\end{align} 
where $\x_t$ is the current iterate, $\eta > 0$ is a step-size, and $\langle{\cdot,\cdot}\rangle$ is the standard inner-product, see for instance \cite{lee2014proximal, patriksson1998cost, doikov2020convex, schmidt201211}. Some methods consider a cubic-regularized variant of \eqref{eq:newtonStep}, see for instance \cite{nesterov2006cubic, nesterov2006cubicCon}.

Problem  \eqref{eq:newtonStep} is most often difficult to solve even when $\R$ is quite simple. For instance, already when $f$ is non-linear and $\R$ is the indicator function for an $\ell_1$ ball in $\reals^n$, Problem \eqref{eq:newtonStep}  does not admit a closed-form solution and requires the use of iterative optimization methods, e.g., first-order methods, to solve it to sufficient accuracy, within each iteration of the Newton method. Indeed there is a significant body of work on  \emph{inexact} proximal Newton methods that maintain a local super-linear convergence rate, provided that Problem \eqref{eq:newtonStep} is solved to error that is driven sufficiently fast (with the Newton iterations counter $t$) to zero, see for instance \cite{lee2014proximal, byrd2016inexact, li2017inexact} and references therein. 

Unfortunately, solving Problem  \eqref{eq:newtonStep} within inexact Newton methods using standard first-order methods can often defeat the purpose of using a second-order method. For instance, even an optimal  first-order method with an accelerated linear convergence rate ,e.g., \cite{nesterov1983method, beck2009fast, nesterov2013gradient}, will require in worst-case  number of gradient computations of (the smooth part of) $\Psi_t(\cdot)$ and prox computations (e.g., Euclidean projections) w.r.t. $\R$ that scale with $\sqrt{\beta/\alpha}$. Computing the gradient of $\Psi_t$ requires in general a dense matrix-vector product computation (multiplication with Hessian matrix), and in case $\R$ is for instance the indicator function for a nuclear norm ball of matrices or a non-trivial polytope, computing the prox operator may also be computationally difficult. Thus, the inner complexity bounds of such inexact Newton methods  are typically expressed in terms of the condition number of $\Psi_t$ and the ambient dimension, without taking advantage of any sparsity, low-rankness, or low-face structure of the global optimum $\x^*$.



In case the bottleneck in solving \eqref{eq:newtonStep} via proximal gradient methods is the complex structure of $\R$, some works such as \cite{carderera2020second, liu2022newton} have considered using the Frank-Wolfe (aka conditional gradient) method, which only requires a linear minimization oracle w.r.t. $\dom(\R)$. However, the slow convergence rate of the Frank-Wolfe method, which is typically $1/k$,  may again result in an overall prohibitive runtime, in particular when the overall desired accuracy is medium-high. In case $\R$ is the indicator function for a polytope, some variants of Frank-Wolfe enjoy a linear convergence rate, however this comes, in worst-case, with a multiplicative factor that scales at least linearly with the product of the first-order condition number of $\Psi_t$ and the ambient dimension \cite{carderera2020second}. We note that while some Frank-Wolfe variants for polytopes can replace the explicit dependence on the ambient dimension with the dimension of the optimal face (containing the optimal solution), which can indeed be low-dimensional in certain settings \cite{garber2016linear, bashiri2017decomposition}, even if $\x^*$ indeed lies on a low-dimensional face, there is in general no guarantee that the solutions to the subproblems \eqref{eq:newtonStep} will also lie on a low-dimensional face.

The above discussions lead us to the following conceptual question:
\begin{center}
\textit{In case the optimal solution to Problem \eqref{eq:optprob} admits some known sparse structure, can it be leveraged towards provable more efficient implementations of proximal Newton methods, without sacrificing the local super-linear convergence?}
\end{center}

In this work we develop a new principled approach towards making proximal Newton methods more efficient to implement while maintaining local super-linear convergence rates. Indeed our approach will be mostly of interest in case the optimal solution $\x^*$ admits a certain sparse structure. For example, when $\R$ is the indicator function for an $\ell_1$ ball or a matrix nuclear norm ball, or even a polytope, we may often expect that $\x^*$ is indeed sparse, may it be entry-wise sparsity for the $\ell_1$ ball case, low rank for the matrix nuclear norm ball case, or that it lies on a low-dimensional face in the polytope case.  

Our approach is based on two key observations. First, we observe that in order to obtain local super-linear convergence (in function value and under either Assumption \ref{ass:sc} or Assumption \ref{ass:qg}), subproblem \eqref{eq:newtonStep} (or a regularized variant of it) need not be solved to optimality (or even with very good accuracy as in classical inexact proximal Newton analyzes \cite{lee2014proximal, byrd2016inexact, li2017inexact}), but only to value that is comparable with that of $\x^*$ --- the optimal solution to our original Problem \eqref{eq:optprob}. We refer to these as \textit{weak proximal Newton oracles} (Definition \ref{def:2wpo} in the sequel). Second, and perhaps surprisingly, we are able to show that \emph{specialized} first-order methods, when applied to subproblems such as  \eqref{eq:newtonStep}, but require to solve it only to the level of error achieved by $\x^*$, can solve these with complexity that scales directly with the sparsity of $\x^*$ and not with the ambient dimension as in standard first-order methods, despite the fact that in general the optimal solutions to subproblem \eqref{eq:newtonStep} need not inherit the sparse structure of $\x^*$. For instance, if $\R$ is the indicator for an $\ell_1$-ball and $\x^*$ is (as often expected in this case) entry-wise sparse, then our solvers for subproblem \eqref{eq:newtonStep} will require to compute the product of the Hessian with only a sparse vector (with same level of sparsity as assumed for $\x^*$), instead of a dense vector as in standard gradient methods. In case $\R$ is the indicator for a matrix nuclear norm ball and $\x^*$ is low-rank (again, as often expected when using such $\R$), our solvers will require to compute only a low-rank singular value decomposition (SVD) (with same rank as the one assumed for $\x^*$), instead of a full-rank SVD required for the projection step onto the nuclear norm ball in standard gradient methods, which is computationally prohibitive in high dimensions. In case $\R$ is a polytope suitable for Frank-Wolfe/conditional  gradient methods (i.e., implementing the linear minimization oracle is efficient), our conditional gradient-based solver will converge linearly and with number of iterations that scales with the dimension of the optimal face containing $\x^*$ and not with the ambient dimension as in standard linearly-converging conditional gradient methods. The specialized first-order methods which we develop are based on \textit{first-order weak proximal oracles}, recently introduced in \cite{pmlr-v89-garber19a, garber2023faster}, and \textit{decomposition-invariant} conditional gradient methods \cite{garber2016linear, bashiri2017decomposition}. 

We note that while a significant part of the literature on second-order methods focuses on \textit{quasi-Netwon} methods in which the second derivative in subproblem \eqref{eq:newtonStep} is replaced with certain approximation $\H_t\approx\nabla^2f(\x_t)$ which is computationally more efficient to work with \cite{lee2014proximal}, such developments are in a sense orthogonal to the approach proposed here and are beyond the scope of this current work. It can indeed be an interesting future direction to examine whether the combination of the ideas presented here and quasi-Newton methods could lead to even more efficient methods.

\subsection{Organization of this paper}
In section \ref{sec:wpo} we define our main object of interest --- \textit{weak proximal Newton oracles} (WPNO), and establish their local super-linear convergence. In Section \ref{sec:fowpo} we derive efficient implementations of our WPNOs based on \emph{first-order} weak proximal oracles. In this section we also discuss concrete examples where our approach may significantly improve over standard methods. In section \ref{sec:dicgwpo} we discuss efficient implementations of our WPNOs based on a state-of-the-art conditional gradient method. 
Finally, in Section \ref{sec:exp} we bring empirical evidence to demonstrate the potential practical benefits of our results.

\section{Weak Proximal Newton Oracles}\label{sec:wpo} 
\begin{definition}[weak proximal Newton oracles]\label{def:2wpo}
We say a map $\mA:\dom(\R)\times\reals_+\rightarrow\dom(\R)$ is a (approximated) weak proximal Newton oracle for Problem \eqref{eq:optprob} with some parameter $C_{\mA}\geq 0$, abbreviated as WPNO, if given input $(\x,\eta)\in\dom(\R)\times\reals_+$, it outputs a point $\v\in\dom(\R)$ such that 
\begin{align}\label{eq:wpno}
\phi_{\x,\eta}(\v) \leq \phi_{\x,\eta}(\x^*) + \eta^2C_{\mA}\Vert{\x-\x^*}\Vert^3,
\end{align}
where
\begin{align}\label{eq:wpno:func}
\phi_{\x,\eta}(\w):=\langle{\w-\x, \nabla{}f(\x)}\rangle + \frac{\eta}{2}\langle{\w-\x, \nabla^2f(\x)(\w-\x)}\rangle + \R(\w).
\end{align}
We say a map $\mA:\dom(\R)\times\reals_+\rightarrow\dom(\R)$ is a (approximated) weak \textit{\underline{regularized}} proximal Newton oracle for Problem \eqref{eq:optprob} with some parameter $C_{\mA}\geq 0$, abbreviated as WRPNO, if given input $(\x,\eta)\in\dom(\R)\times\reals_+$, it outputs a point $\v\in\dom(\R)$ such that 
\begin{align}\label{eq:wrpno}
\phir_{\x,\eta}(\v) \leq \phir_{\x,\eta}(\x^*) + \eta^2C_{\mA}\Vert{\x - \x^*}\Vert^3,
\end{align}
where
\begin{align}\label{eq:wrpno:func}
\phir_{\x,\eta}(\w):=\phi_{\x,\eta}(\w) + \frac{\eta^2\beta_2}{6}\Vert{\w-\x}\Vert^3.
\end{align}
\end{definition}
In other words and less formally: A WPNO solves the same type of subproblem as in the proximal Newton method (e.g., \cite{lee2014proximal}), but does so only to an approximation error competitive with the global optimal solution $\x^*$, i.e., up to an additive error scaling with $\Vert{\x-\x^*}\Vert^3$ (where $\x$ is the current point) from $\x^*$. A WRPNO does the same but w.r.t. the type of subproblem appearing in the cubic-regularized Newton method (e.g. \cite{nesterov2006cubic}).
 
Let us comment why we consider both regularized and unregularized WPNOs. As we shall see in Theorem \ref{thm:main}, the use of the regularized WRPNO will lead to a preferable super-linear convergence rate, however it will mostly be interesting when Assumption \ref{ass:qg} holds. This is because under Assumption \ref{ass:qg}, the function $\phi_{\x,\eta}^r$ admits a bounded Lipschitz parameter for the gradient of its smooth part over $\dom(\R)$ (since $\dom(\R)$ itself is bounded), which is compatible with the use of first-order methods to implement the WRPNO. When only Assumption \ref{ass:sc} holds, we can no longer assume the smooth part of $\phi_{\x,\eta}^r$ has bounded Lipschitz parameter for its gradient (due to the cubic term), and instead we rely on its unregularized  version $\phi_{\x,\eta}$, which is in particular quadratic, but which will lead to a slightly inferior (in terms of constants) super-linear convergence rate.

We introduce some useful notation. 
For any $\x\in\dom(\R)$ and $\z\in\E$ we denote $\Vert{\z}\Vert_{\x} = \sqrt{\z^{\top}\nabla^2{}f(\x)\z}$. Throughout we shall denote the differential parts of $\phi_{\x,\eta}(\cdot)$ and $\phi_{\x,\eta}^r(\cdot)$ as:
\begin{align*}
Q_{\x,\eta}(\w) &:= \langle{\w-\x, \nabla{}f(\x)}\rangle + \frac{\eta}{2}\langle{\w-\x, \nabla^2f(\x)(\w-\x)}\rangle; \\
Q_{\x,\eta}^r(\w) &:= \langle{\w-\x, \nabla{}f(\x)}\rangle + \frac{\eta}{2}\langle{\w-\x, \nabla^2f(\x)(\w-\x)}\rangle + \frac{\eta^2\beta_2}{6}\Vert{\w - \x}\Vert^3.
\end{align*}
In particular, when $\eta=1$ (unit step-size), we shall simply write $Q_{\x}(\w)$ and $Q_{\x}^r(\w)$, respectively.

We now present several lemmas that will be important for our derivations.
\begin{lemma}[Lemma 1 in \cite{nesterov2006cubic}]\label{lem:funie}
For any $\x,\y\in\dom(\R)$ it holds that
\begin{align*}
\left\vert{}f(\y) - f(\x) - \langle{\y-\x, \nabla{}f(\x)}\rangle - \frac{1}{2}\langle{\y-\x, \nabla^2f(\x)(\y-\x)}\rangle\right\vert \leq \frac{\beta_2}{6}\Vert{\x-\y}\Vert^3.
\end{align*}
\end{lemma}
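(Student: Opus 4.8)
This is the classical second-order Taylor estimate for functions with Lipschitz Hessian, and the natural route is the integral form of Taylor's theorem. The plan is to fix $\x,\y\in\dom(\R)$, introduce the scalar function $g(t) := f(\x + t(\y-\x))$ for $t\in[0,1]$, and note that since $\R$ is convex its domain is convex, so the whole segment $\{\x + t(\y-\x) : t\in[0,1]\}$ lies in $\dom(\R)$ and $g$ is twice continuously differentiable there with $g'(t) = \langle{\y-\x,\nabla f(\x+t(\y-\x))}\rangle$ and $g''(t) = \langle{\y-\x,\nabla^2 f(\x+t(\y-\x))(\y-\x)}\rangle$.

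Next I would write $f(\y) - f(\x) - \langle{\y-\x,\nabla f(\x)}\rangle = g(1) - g(0) - g'(0) = \int_0^1 (1-t)\,g''(t)\,dt$ (integration by parts on $\int_0^1 g''$, or equivalently the integral remainder in Taylor's formula). Since $\int_0^1 (1-t)\,dt = \tfrac12$, subtracting the quadratic term gives
\[
f(\y) - f(\x) - \langle{\y-\x,\nabla f(\x)}\rangle - \tfrac12\langle{\y-\x,\nabla^2 f(\x)(\y-\x)}\rangle = \int_0^1 (1-t)\,\big\langle{\y-\x,\,[\nabla^2 f(\x+t(\y-\x)) - \nabla^2 f(\x)](\y-\x)}\big\rangle\,dt .
\]
Now I would bound the integrand: by Cauchy--Schwarz (or the definition of operator norm) and the $\beta_2$-Lipschitz continuity of $\nabla^2 f$ along the segment, $|\langle{\y-\x,\,[\nabla^2 f(\x+t(\y-\x)) - \nabla^2 f(\x)](\y-\x)}\rangle| \leq \Vert{\nabla^2 f(\x+t(\y-\x)) - \nabla^2 f(\x)}\Vert\,\Vert{\y-\x}\Vert^2 \leq \beta_2\,t\Vert{\y-\x}\Vert\cdot\Vert{\y-\x}\Vert^2 = \beta_2 t\Vert{\x-\y}\Vert^3$. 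Plugging this in and using $\int_0^1 (1-t)t\,dt = \tfrac16$ yields the claimed bound $\tfrac{\beta_2}{6}\Vert{\x-\y}\Vert^3$, with the absolute value handled by bounding $|\int| \le \int |\cdot|$.

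There is no real obstacle here: the only points requiring a word of care are (i) invoking convexity of $\dom(\R)$ so that the Lipschitz estimate for $\nabla^2 f$ is legitimately applied at the interior point $\x+t(\y-\x)$, and (ii) the elementary identities $\int_0^1 (1-t)\,dt = \tfrac12$ and $\int_0^1 (1-t)t\,dt = \tfrac16$. Since the statement is quoted verbatim as Lemma 1 of \cite{nesterov2006cubic}, it is equally legitimate to simply cite it; I include the short argument above for completeness.
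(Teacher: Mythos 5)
Your argument is correct: the integral-remainder form of Taylor's theorem, the Lipschitz bound on the Hessian difference along the segment (legitimate since $\dom(\R)$ is convex), and the identity $\int_0^1 (1-t)t\,dt = \tfrac16$ give exactly the stated bound. The paper itself offers no proof — it quotes this as Lemma 1 of the cited Nesterov paper — and your derivation is precisely the standard argument used there, so there is nothing to reconcile.
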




Lemma \ref{lem:funie} leads to the following inequality which will be of use.
\begin{lemma}\label{lem:QGprox}
For any $\x,\y\in\dom(\R)$ it holds that,
\begin{align*}
\phir_{\x,1}(\y) - \phir_{\x,1}(\x^*) \geq F(\y) - F^*- \frac{\beta_2}{3}\Vert{\x-\x^*}\Vert^3.
\end{align*}
\end{lemma}
\begin{proof}
Fix some $\x, \y$ in $\dom(\R)$.
From Lemma \ref{lem:funie} we have the following two inequalities:
\begin{align*}
Q_{\x}^r(\x^*) &\leq f(\x^*) - f(\x) + \frac{\beta_2}{3}\Vert{\x-\x^*}\Vert^3 \quad \textrm{and} \quad  Q_{\x}^r(\y) \geq f(\y) - f(\x). 
\end{align*}
Combining, we have that
\begin{align*}
Q_{\x}^r(\y) - Q_{\x}^r(\x^*) \geq f(\y) - f(\x^*) -  \frac{\beta_2}{3}\Vert{\x-\x^*}\Vert^3.
\end{align*}
Adding $\R(\y)-\R(\x^*)$ to both sides completes the proof.
\end{proof}

The following lemma bounds the distance of the output of a WPNO (Definition \ref{def:wpo}) from the optimal solution $\x^*$. For clarity of presentation, we defer the proof to the appendix.
\begin{lemma}\label{lem:distBound}
Suppose Assumption \ref{ass:sc} holds and let $\x\in\mL_1$. Let $\v\in\dom(\R)$ be the output of a WPNO with approximation parameter $C_{\mA}$, when called with input $(\x,\eta)\in\dom(\R)\times\reals_+$. Then,
\begin{align*} 
\Vert{\v-\x^*}\Vert \leq \frac{\beta}{\alpha}\left({3+\frac{2}{\eta}}\right)\Vert{\x-\x^*}\Vert + \frac{\eta{}C_{\mA}}{\alpha}\Vert{\x-\x^*}\Vert^2.
\end{align*}
\end{lemma}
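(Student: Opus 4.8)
The plan is to combine the $\eta\alpha$-strong convexity of the subproblem objective $\phi_{\x,\eta}$ with the WPNO guarantee \eqref{eq:wpno}, tested against $\x^*$. Since $\x\in\mL_1$, Assumption~\ref{ass:sc} gives $\nabla^2 f(\x)\succeq\alpha\I$, so the smooth part of $\phi_{\x,\eta}$ has Hessian $\eta\nabla^2 f(\x)\succeq\eta\alpha\I$, and adding the convex function $\R$ keeps $\phi_{\x,\eta}$ $\eta\alpha$-strongly convex. The point requiring care at the start is that $\x^*$ need not minimize $\phi_{\x,\eta}$, so I cannot invoke strong convexity around the minimizer; instead I write down an explicit subgradient of $\phi_{\x,\eta}$ at $\x^*$. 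First-order optimality of $\x^*$ for Problem~\eqref{eq:optprob} gives $-\nabla f(\x^*)\in\partial\R(\x^*)$, hence
\begin{align*}
\g\ :=\ \nabla f(\x)+\eta\nabla^2 f(\x)(\x^*-\x)-\nabla f(\x^*)\ \in\ \partial\phi_{\x,\eta}(\x^*).
\end{align*}

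Next, strong convexity of $\phi_{\x,\eta}$ evaluated at the pair $(\x^*,\v)$ gives
\begin{align*}
\phi_{\x,\eta}(\v)\ \geq\ \phi_{\x,\eta}(\x^*)+\langle{\g,\v-\x^*}\rangle+\frac{\eta\alpha}{2}\Vert{\v-\x^*}\Vert^2,
\end{align*}
and subtracting $\phi_{\x,\eta}(\x^*)$, applying \eqref{eq:wpno}, and using Cauchy--Schwarz yields
\begin{align*}
\frac{\eta\alpha}{2}\Vert{\v-\x^*}\Vert^2\ \leq\ \eta^2 C_{\mA}\Vert{\x-\x^*}\Vert^3+\Vert{\g}\Vert\,\Vert{\v-\x^*}\Vert.
\end{align*}
I then bound $\Vert{\g}\Vert$ using only the curvature bounds in Assumption~\ref{ass:sc}: the segment $[\x,\x^*]$ lies in the convex set $\mL_1$ (note $\x^*\in\mL_1$ since $F^*\leq F(\x_1)$), so $\nabla f$ is $\beta$-Lipschitz along it and $\nabla^2 f(\x)\preceq\beta\I$; therefore $\Vert{\g}\Vert\leq\Vert{\nabla f(\x)-\nabla f(\x^*)}\Vert+\eta\Vert{\nabla^2 f(\x)(\x-\x^*)}\Vert\leq(1+\eta)\beta\Vert{\x-\x^*}\Vert$.

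It remains to convert this quadratic inequality for $\Vert{\v-\x^*}\Vert$ into the stated linear-plus-quadratic bound, which is the delicate step: treating it as a genuine quadratic would only yield a residual term of order $\Vert{\x-\x^*}\Vert^{3/2}$, not of the advertised form. The remedy is a short dichotomy on the size of $\Vert{\v-\x^*}\Vert$ relative to $\Vert{\x-\x^*}\Vert$. If $\Vert{\v-\x^*}\Vert\leq 2\Vert{\x-\x^*}\Vert$, the claimed inequality holds trivially because $\beta/\alpha\geq 1$ and $3+2/\eta\geq 2$. Otherwise $\Vert{\x-\x^*}\Vert^3\leq\tfrac12\Vert{\v-\x^*}\Vert\,\Vert{\x-\x^*}\Vert^2$; substituting this and the bound on $\Vert{\g}\Vert$ into the previous display, dividing through by $\Vert{\v-\x^*}\Vert>0$, and multiplying by $2/(\eta\alpha)$ gives
\begin{align*}
\Vert{\v-\x^*}\Vert\ \leq\ \frac{2(1+\eta)\beta}{\eta\alpha}\Vert{\x-\x^*}\Vert+\frac{\eta C_{\mA}}{\alpha}\Vert{\x-\x^*}\Vert^2,
\end{align*}
and since $2(1+\eta)/\eta=2+2/\eta\leq 3+2/\eta$ this is exactly the asserted bound. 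The rest is routine arithmetic.
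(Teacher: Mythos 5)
Your proof is correct. It shares the paper's skeleton: $\eta\alpha$-strong convexity anchored at $\x^*$ (valid since $\x\in\mL_1$ gives $\nabla^2 f(\x)\succeq\alpha\I$), the WPNO guarantee tested at $\x^*$, the bound $\Vert{\nabla f(\x)-\nabla f(\x^*)}\Vert\le\beta\Vert{\x-\x^*}\Vert$ from $\beta$-smoothness over $\mL_1$, and the same two-case dichotomy comparing $\Vert{\v-\x^*}\Vert$ with $2\Vert{\x-\x^*}\Vert$ (the paper's choice $M=2$). Where you differ is the middle step: the paper introduces the auxiliary quadratic $\theta(\w)=\langle{\w-\x^*,\nabla f(\x^*)}\rangle+\frac{\eta}{2}\langle{\w-\x^*,\nabla^2 f(\x)(\w-\x^*)}\rangle+\R(\w)$, observes that $\x^*$ minimizes it, and then relates $\theta(\v)$ back to $\phi_{\x,\eta}$-values through triangle-inequality manipulations of the $\Vert{\cdot}\Vert_{\x}$ norms; you instead exhibit the explicit subgradient $\g=\nabla f(\x)+\eta\nabla^2 f(\x)(\x^*-\x)-\nabla f(\x^*)\in\partial\phi_{\x,\eta}(\x^*)$ (using $-\nabla f(\x^*)\in\partial\R(\x^*)$) and apply the strong-convexity inequality plus Cauchy--Schwarz directly to $\phi_{\x,\eta}$. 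Your route is somewhat more economical: it avoids the $\Vert{\cdot}\Vert_{\x}$ bookkeeping and in the nontrivial case yields the marginally sharper linear coefficient $\frac{\beta}{\alpha}\left({2+\frac{2}{\eta}}\right)$, which you then relax to the stated $\frac{\beta}{\alpha}\left({3+\frac{2}{\eta}}\right)$; the paper's $\theta$-based detour buys nothing extra here, so the two arguments are equivalent in substance, with yours being the cleaner write-up of the same idea.
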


\subsection{Local super-linear convergence}
\begin{theorem}[local super-linear convergence with WPNOs and unit step-size]\label{thm:main}
Suppose either Assumption \ref{ass:sc} or Assumption \ref{ass:qg} holds true, and let $\mA$ be a \textit{weak \underline{regularized} proximal Newton oracle} with parameter $C_{\mA}$, as defined in Eq. \eqref{eq:wrpno}.  Consider a sequence of points $(\x_t)_{t\geq 1}$ such that $\x_1\in\dom(\R)$ and
\begin{align}\label{eq:updatestep}
\forall t\geq 1: \qquad \x_{t+1} \gets \left\{ \begin{array}{ll}
         \mA(\x_{t}, 1) & \mbox{if $F(\mA(\x_{t}, 1)) < F(\x_t)$};\\
        \x_t & else.\end{array} \right.
\end{align}
(i.e., using fixed step-size $\eta=1$). Then, 
\begin{align}\label{eq:supconv:1}
\forall t\geq 1: \qquad F(\x_{t+1}) - F^* \leq \frac{2\sqrt{2}}{\alpha^{3/2}}\left({C_{\mA}+\frac{\beta_2}{3}}\right)\left({F(\x_t) - F^*}\right)^{3/2}.
\end{align} 
If the oracle $\mA$ in \eqref{eq:updatestep} is replaced with a (non-regularized) \textit{weak proximal Newton oracle} with parameter $C_{\mA}$, as defined in Eq. \eqref{eq:wpno}, and Assumption \ref{ass:sc} holds true, then the resulting  sequence of points $(\x_t)_{t\geq 1}$ satisfies,
\begin{align}\label{eq:supconv:2}
\forall t\geq 1: ~~~ F(\x_{t+1})- F^* &\leq \frac{2\sqrt{2}}{\alpha^{3/2}}\left({\frac{5\beta_2}{6}+C_{\mA} + \frac{1000\beta_2\beta^3}{3\alpha^3}}\right)(F(\x_t)-F^*)^{3/2} \nonumber \\
&~~~+\frac{64\beta_2C_{\mA}^3}{3\alpha^{6}}\left({F(\x_t)-F^*}\right)^3.
\end{align}  
\end{theorem}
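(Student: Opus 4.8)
The plan is to prove the two bounds by a common scheme: use the WPNO/WRPNO accuracy guarantee to relate the value of $\phi$ (or $\phi^r$) at the new iterate $\x_{t+1}$ to its value at $\x^*$, then use Lemma~\ref{lem:QGprox} (and Lemma~\ref{lem:funie}) to convert this into a bound on $F(\x_{t+1})-F^*$ in terms of $\Vert\x_t-\x^*\Vert^3$, and finally convert $\Vert\x_t-\x^*\Vert$ back into $F(\x_t)-F^*$ using quadratic growth (which holds under either assumption: under Assumption~\ref{ass:sc} strong convexity of $F$ on $\mL_1$ gives $\tfrac{\alpha}{2}\Vert\x_t-\x^*\Vert^2\le F(\x_t)-F^*$, and under Assumption~\ref{ass:qg} it is assumed). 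This last conversion is what produces the $\alpha^{-3/2}(F(\x_t)-F^*)^{3/2}$ shape. I would first dispose of the monotone case: if the update sets $\x_{t+1}=\x_t$ then $F(\x_{t+1})-F^*=F(\x_t)-F^*$, but this only happens when $F(\mathcal A(\x_t,1))\ge F(\x_t)$, so it suffices to prove the bound for the value $\mathcal A(\x_t,1)$ and note that taking the $\min$ with $F(\x_t)$ only helps — actually one must check the claimed inequalities are consistent with $F(\x_{t+1})\le F(\x_t)$, i.e. that the right-hand sides are meaningful; since super-linear bounds are only interesting once $F(\x_t)-F^*$ is small, and in any case $F(\x_{t+1})\le F(\x_t)$ by construction, the stated bound holds vacuously or by monotonicity whenever the RHS exceeds $F(\x_t)-F^*$. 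So I will assume $\x_{t+1}=\mathcal A(\x_t,1)$.

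For the regularized bound \eqref{eq:supconv:1}: apply Lemma~\ref{lem:QGprox} with $\y=\x_{t+1}$ and $\x=\x_t$ to get $F(\x_{t+1})-F^*\le \phi^r_{\x_t,1}(\x_{t+1})-\phi^r_{\x_t,1}(\x^*)+\tfrac{\beta_2}{3}\Vert\x_t-\x^*\Vert^3$; then the WRPNO guarantee \eqref{eq:wrpno} bounds $\phi^r_{\x_t,1}(\x_{t+1})-\phi^r_{\x_t,1}(\x^*)\le C_{\mathcal A}\Vert\x_t-\x^*\Vert^3$ (with $\eta=1$). Hence $F(\x_{t+1})-F^*\le (C_{\mathcal A}+\tfrac{\beta_2}{3})\Vert\x_t-\x^*\Vert^3$, and substituting $\Vert\x_t-\x^*\Vert\le\sqrt{2/\alpha}\,(F(\x_t)-F^*)^{1/2}$ gives exactly $\tfrac{2\sqrt2}{\alpha^{3/2}}(C_{\mathcal A}+\tfrac{\beta_2}{3})(F(\x_t)-F^*)^{3/2}$. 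This half is clean.

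For the non-regularized bound \eqref{eq:supconv:2}, the issue is that $\phi_{\x,1}$ lacks the cubic regularizer, so I cannot directly invoke Lemma~\ref{lem:QGprox}. Instead I would write $\phi^r_{\x_t,1}(\x_{t+1}) = \phi_{\x_t,1}(\x_{t+1}) + \tfrac{\beta_2}{6}\Vert\x_{t+1}-\x_t\Vert^3$, bound the first term via the WPNO guarantee \eqref{eq:wpno} (comparing to $\x^*$, and noting $\phi_{\x_t,1}(\x^*)\le\phi^r_{\x_t,1}(\x^*)$ since the regularizer is nonnegative), and then control the extra cubic term $\Vert\x_{t+1}-\x_t\Vert^3$. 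The triangle inequality gives $\Vert\x_{t+1}-\x_t\Vert\le\Vert\x_{t+1}-\x^*\Vert+\Vert\x_t-\x^*\Vert$, and Lemma~\ref{lem:distBound} (with $\eta=1$) bounds $\Vert\x_{t+1}-\x^*\Vert$ by $\tfrac{5\beta}{\alpha}\Vert\x_t-\x^*\Vert + \tfrac{C_{\mathcal A}}{\alpha}\Vert\x_t-\x^*\Vert^2$ — this requires $\x_t\in\mL_1$, which holds under Assumption~\ref{ass:sc} since the sequence is monotone nonincreasing in $F$. Cubing this sum produces the $\Vert\x_t-\x^*\Vert^3$ term with a constant on the order of $(\beta/\alpha)^3$ (yielding the $\tfrac{1000\beta_2\beta^3}{3\alpha^3}$ and $\tfrac{5\beta_2}{6}$ pieces after combining with the $\tfrac{\beta_2}{6}$ factor and the $\tfrac{\beta_2}{3}$ from Lemma~\ref{lem:QGprox}) plus a $\Vert\x_t-\x^*\Vert^6$ term with constant $\sim\beta_2 C_{\mathcal A}^3/\alpha^3$. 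Finally substituting $\Vert\x_t-\x^*\Vert^2\le\tfrac2\alpha(F(\x_t)-F^*)$ into the degree-3 and degree-6 terms gives the $(F(\x_t)-F^*)^{3/2}$ and $(F(\x_t)-F^*)^3$ terms respectively, with the claimed constants. The main obstacle — and the only genuinely fiddly part — is the bookkeeping in expanding $(\Vert\x_{t+1}-\x^*\Vert+\Vert\x_t-\x^*\Vert)^3$ after substituting the Lemma~\ref{lem:distBound} bound, keeping track of which cross-terms are $O(\Vert\x_t-\x^*\Vert^3)$ versus higher order, and being slightly wasteful with constants (using $\beta/\alpha\ge1$) to collapse everything into the stated form; I expect the constant $1000$ is exactly such a deliberately loose bound obtained from $(5)^3=125$ times a small combinatorial factor.
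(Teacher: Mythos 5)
Your proposal follows essentially the same route as the paper's proof: the regularized bound via the WRPNO guarantee combined with the cubic Taylor estimate (you invoke Lemma~\ref{lem:QGprox}, the paper inlines Lemma~\ref{lem:funie} twice, which is the same content), and the unregularized bound via the triangle inequality, $(a+b)^3\le 4(a^3+b^3)$, Lemma~\ref{lem:distBound} with $\eta=1$, and quadratic growth, with your reduction to the case $\x_{t+1}=\mA(\x_t,1)$ (since $F(\x_{t+1})\le F(\mA(\x_t,1))$ in either branch of the update) matching the paper's implicit argument. One bookkeeping caveat: in the second part, discarding the regularizer via $\phi_{\x_t,1}(\x^*)\le\phir_{\x_t,1}(\x^*)$ after invoking Lemma~\ref{lem:QGprox} yields the coefficient $C_{\mA}+\beta_2$ rather than the stated $C_{\mA}+\tfrac{5\beta_2}{6}$; keeping the exact identity $\phir_{\x_t,1}(\x^*)=\phi_{\x_t,1}(\x^*)+\tfrac{\beta_2}{6}\Vert{\x_t-\x^*}\Vert^3$ (or, as the paper does, applying Lemma~\ref{lem:funie} directly to $\phi_{\x_t,1}(\x^*)$) recovers the claimed constant.
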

In both cases of the theorem, if $C_{\mA} = O(\beta_2)$, then the local super-linear convergence rate is of the form $F(\x_{t+1}) -F^* = O\left({\left({F(\x_t) - F^*}\right)^{3/2}}\right)$, provided that the intialization point $\x_1$ is such that the initial error $F(\x_1) - F^*$ is already sufficiently small. In particular,  the rate in \eqref{eq:supconv:1} is the same, up to a universal constant, as the rate obtained in \cite{nesterov2006cubic} (Theorem 5) for the \textit{exact} cubic-regularized Newton method in the \textit{unconstrained case} (i.e., $\R\equiv 0$). 

As already discussed above, while the rate in \eqref{eq:supconv:1} is clearly preferable to \eqref{eq:supconv:2}, it will be mostly of interest when Assumption \ref{ass:qg} holds (since then the regularized WPNO could be implemented efficiently), while the rate in \eqref{eq:supconv:2} will be relevant under Assumption \ref{ass:sc}.

\begin{proof}[Proof of Theorem \ref{thm:main}]
Using Lemma \ref{lem:funie} we have that for any $t\geq 1$,
\begin{align}\label{eq:thm:suplin:1}
F(\x_{t+1}) &= f(\x_{t+1}) + \R(\x_{t+1})  \nonumber \\
&\leq f(\x_t) + \langle{\x_{t+1}-\x_t,\nabla{}f(\x_t)}\rangle + \frac{1}{2}\langle{\x_{t+1}-\x_t,\nabla^2f(\x_t)(\x_{t+1}-\x_t)}\rangle \nonumber\\
&~~~ + \frac{\beta_2}{6}\Vert{\x_{t+1}-\x_t}\Vert^3 + \R(\x_{t+1}).
\end{align}
Using the definition of $\phir_{\x_t,1}$ (Eq. \eqref{eq:wrpno:func}) and the definition of $\x_{t+1}$ in \eqref{eq:updatestep} we have that,
\begin{align*}
F(\x_{t+1})&\leq f(\x_t) + \phir_{\x_t,1}(\x_{t+1}) \leq f(\x_t) + \phir_{\x_t,1}(\x^*) + C_{\mA}\Vert{\x_t-\x^*}\Vert^3.
\end{align*}
Using Lemma \ref{lem:funie} again we have that,
\begin{align*}
\phir_{\x_t,1}(\x^*) \leq f(\x^*) -f(\x_t) +  \frac{\beta_2}{3}\Vert{\x^*-\x_t}\Vert^2 + \R(\x^*).
\end{align*}
Combining the last two inequalities  yields,
\begin{align*}
F(\x_{t+1}) \leq F^*  + \left({C_{\mA} + \frac{\beta_2}{3}}\right)\Vert{\x_t-\x^*}\Vert^3.
\end{align*}
Both Assumption \ref{ass:sc} and Assumption \ref{ass:qg} imply that $\Vert{\x_t-\x^*}\Vert^2 \leq \frac{2}{\alpha}\left({F(\x_t)-F^*}\right)$. Using this, and subtracting $F^*$ from both sides, we have that indeed
\begin{align*}
F(\x_{t+1})-F^* \leq \frac{2\sqrt{2}}{\alpha^{3/2}}\left({C_{\mA} + \frac{\beta_2}{3}}\right)\left({F(\x_t) - F^*}\right)^{3/2},
\end{align*}
which proves the first part of the theorem.\\

Let us now prove in similar fashion the second part of the theorem under Assumption \ref{ass:sc}. Starting from Eq. \eqref{eq:thm:suplin:1} and using the definition of $\phi_{\x_t,1}$ (Eq. \eqref{eq:wpno:func}) and the definition of $\x_{t+1}$ we have that,
\begin{align*}
F(\x_{t+1})&\leq f(\x_t) + \phi_{\x_t,1}(\x_{t+1}) + \frac{\beta_2}{6}\Vert{\x_{t+1}-\x_t}\Vert^3 \\
&\leq f(\x_t) + \phi_{\x_t,1}(\x^*) +  \frac{\beta_2}{6}\Vert{\x_{t+1}-\x_t}\Vert^3 + C_{\mA}\Vert{\x_t-\x^*}\Vert^3 \\
&\leq f(\x_t) + \phi_{\x_t,1}(\x^*) +  \frac{\beta_2}{6}\left({\Vert{\x_{t+1}-\x^*}\Vert + \Vert{\x_t-\x^*}\Vert}\right)^3 + C_{\mA}\Vert{\x_t-\x^*}\Vert^3 \\
&\leq f(\x_t) + \phi_{\x_t,1}(\x^*) +  \frac{2\beta_2}{3}\Vert{\x_{t+1}-\x^*}\Vert^3 + \left({\frac{2\beta_2}{3}+C_{\mA}}\right)\Vert{\x_t-\x^*}\Vert^3,
\end{align*}
where the last inequality follows the inequality $(a+b)^3 \leq 4(a^3+b^3)$ that holds for any two non-negative scalars $a,b$.

Using Lemma \ref{lem:funie} we have that,
\begin{align*}
\phi_{\x_t,1}(\x^*)\leq f(\x^*) -f(\x_t) +  \frac{\beta_2}{6}\Vert{\x^*-\x_t}\Vert^2 + \R(\x^*).
\end{align*}
Using this and Lemma \ref{lem:distBound} w.r.t. the term $\Vert{\x_{t+1}-\x^*}\Vert$,  we obtain that
\begin{align*}
F(\x_{t+1}) &\leq F^*  + \left({\frac{5\beta_2}{6}+C_{\mA}}\right)\Vert{\x_t-\x^*}\Vert^3 \\
&~~~+  \frac{2\beta_2}{3}\left({\frac{5\beta}{\alpha}\Vert{\x_t-\x^*}\Vert + \frac{C_{\mA}}{\alpha}\Vert{\x_t-\x^*}\Vert^2}\right)^3 \\
&\leq F^* + \left({\frac{5\beta_2}{6}+C_{\mA} + \frac{1000\beta_2\beta^3}{3\alpha^3}}\right)\Vert{\x_t-\x^*}\Vert^3 +\frac{8\beta_2C_{\mA}^3}{3\alpha^3}\Vert{\x_t-\x^*}\Vert^6,
\end{align*}
where in the last inequality we have used again the inequality $(a+b)^3 \leq 4(a^3+b^3)$.

Using the fact that under Assumption \ref{ass:sc}, $F(\cdot)$ is $\alpha$-strongly convex over the initial level set $\mL_1$, and subtracting $F^*$ from both sides we have that,
\begin{align*}
F(\x_{t+1})- F^* &\leq \frac{2\sqrt{2}}{\alpha^{3/2}}\left({\frac{5\beta_2}{6}+C_{\mA} + \frac{1000\beta_2\beta^3}{3\alpha^3}}\right)(F(\x_t)-F^*)^{3/2} \\
&~~~+\frac{64\beta_2C_{\mA}^3}{3\alpha^{6}}\left({F(\x_t)-F^*}\right)^3,
\end{align*}
which proves the second part of the theorem.
\end{proof}

\section{Implementing WPNOs via First-Order Weak Proximal Oracles}\label{sec:fowpo}
In this section we discuss the efficient implementation of WPNOs (Definition \ref{def:2wpo}) using specialized first-order methods based on  \textit{first-order weak proximal oracles} \cite{pmlr-v89-garber19a, garber2023faster}. In particular, we shall derive novel convergence results for these first-order methods that allow to leverage the sparse structure of $\x^*$, even when applied to optimization problems whose optimal solutions do not necessarily admit such sparse structure.

Throughout this section we fix some iteration $t$ of the iterations described in Eq. \eqref{eq:updatestep}. Accordingly, in case Assumption \ref{ass:sc} holds we shall use the short notation $Q_t = Q_{\x_t}$ and $\phi_t = \phi_{\x_t,1}$, and in case Assumption \ref{ass:qg} holds, we shall denote  $Q_t = Q_{\x_t}^r$ and $\phi_t = \phi_{\x_t,1}^r$. 
In particular, under Assumption \ref{ass:sc}, $Q_t$ is quadratic, and under Assumption \ref{ass:qg} $\dom(\R)$ is bounded, and thus we can conclude that in both cases $Q_t$ has Lipschitz continuous gradient over $\dom(\R)$, which we denote throughout this section by $\tilde{\beta}$. If $\phi_t = \phi_{\x_t,1}$ then we have $\nabla^2\phi_t = \nabla^2f(\x_t)$ and thus according to Assumption \ref{ass:sc} we can set $\tilde{\beta} = \beta$. If $\phi_t = \phir_{\x_t,1}$, then  a simple calculation yields that $\tilde{\beta} \leq \beta + \frac{\beta_2{}D}{2}$. That is, we can set
\begin{align}\label{eq:tbeta}
\tilde{\beta} =  \left\{\begin{array}{ll}
        \beta  & \mbox{if Assumption \ref{ass:sc} holds};\\
        \beta + \frac{\beta_2{}D}{2} & \mbox{if Assumption \ref{ass:qg} holds},\end{array} \right.
\end{align}


\begin{definition}[first-order weak proximal oracle]\label{def:wpo}
We say a map $\mA_1:\dom(\R)\times\reals_+\rightarrow\dom(\R)$ is a (first-order) weak proximal oracle (for $\phi_t$), abbreviated as WPO, if given inputs $(\y,\lambda)\in\dom(\R)\times\reals_+$, it outputs a point $\w\in\dom(\R)$ such that
\begin{align}\label{eq:wpo}
\psi_{\y,\lambda}(\w) \leq \min\{\psi_{\y,\lambda}(\x^*),~\psi_{\y,\lambda}(\y)\},
\end{align}
where
\begin{align}\label{eq:wpo:func}
\psi_{\y,\lambda}(\z):=\langle{\z-\y, \nabla{}Q_t(\y)}\rangle + \frac{\lambda\tilde{\beta}}{2}\Vert{\z-\y}\Vert^2 + \R(\z). 
\end{align}
\end{definition}
In Section \ref{sec:fowpo:imp} we discuss in detail cases of interest in which a first-order weak proximal oracle could be implemented very efficiently by leveraging sparse structures in $\x^*$. 

\begin{algorithm}
\begin{algorithmic}
\caption{W(R)PNO via First-Order WPO}\label{alg:wpo:1}
\STATE input: $\mA_1$ --- a first-order WPO (Definition \ref{def:wpo}), $\x_t\in\dom(\R)$ --- init. point
\STATE $\y_1 \gets \x_t$
\FOR{$i=1,2,\dots$}
\STATE choose step-size $\lambda_i\in[0,1]$
\STATE $\w_i \gets$ output of  $\mA_1$ when called with input $(\y_i,\lambda_i)$ 
\STATE $\y_{i+1} \gets (1-\lambda_i)\y_i + \lambda_i\w_i$
\ENDFOR
\end{algorithmic}
\end{algorithm}

\begin{theorem}
\label{thm:wpo:1}
If Assumption \ref{ass:sc} holds and $\phi_t = \phi_{\x_t,1}$, then the sequence $(\y_i)_{i\geq 1}$ produced by Algorithm \ref{alg:wpo:1} with a fixed step-size $\lambda_i = \frac{\alpha}{\tilde{\beta}}$ ($\alpha$ as in Assumption \ref{ass:sc}) satisfies for all $i\geq 1$:
\begin{align}\label{eq:thm:wpo1:res1}
\phi_t(\y_{i}) - \phi_t(\x^*) 
&\leq  \left({F(\x_t) - F^* + \frac{\beta_2}{6}\Vert{\x_t-\x^*}\Vert^3}\right)\left({1-\frac{\alpha}{\tilde{\beta}}}\right)^{i-1}.
\end{align}
If Assumption \ref{ass:qg} holds and $\phi_t = \phir_{\x_t,1}$, then using a fixed step-size $\lambda_i = \frac{\alpha}{2\tilde{\beta}}$ ( $\alpha$ as in Assumption \ref{ass:qg}) satisfies that for all $i\geq 1$:
\begin{align}\label{eq:thm:wpo1:res2}
\phi_t(\y_{i}) - \phi_t(\x^*) 
&\leq \left({F(\x_t) - F^*}\right)\left({1-\frac{\alpha}{4\tilde{\beta}}}\right)^{i-1} + \frac{\beta_2}{3}\Vert{\x_t-\x^*}\Vert^3.
\end{align}
\end{theorem}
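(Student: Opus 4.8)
\textbf{Proof proposal for Theorem \ref{thm:wpo:1}.}

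The plan is to analyze Algorithm \ref{alg:wpo:1} as a Frank--Wolfe-type / conditional-gradient scheme driven by the weak proximal oracle, and to track the progress of $\phi_t$ toward the reference value $\phi_t(\x^*)$. The key structural fact is that $Q_t$ has $\tilde\beta$-Lipschitz gradient over $\dom(\R)$ (this is exactly why $\tilde\beta$ is defined as in Eq. \eqref{eq:tbeta}), so the standard descent lemma gives, for the update $\y_{i+1} = \y_i + \lambda_i(\w_i - \y_i)$,
\begin{align*}
Q_t(\y_{i+1}) \leq Q_t(\y_i) + \lambda_i\langle{\w_i - \y_i, \nabla Q_t(\y_i)}\rangle + \frac{\lambda_i^2\tilde\beta}{2}\Vert{\w_i - \y_i}\Vert^2.
\end{align*}
Adding $\R(\y_{i+1}) \leq (1-\lambda_i)\R(\y_i) + \lambda_i\R(\w_i)$ (convexity of $\R$) and recognizing the right-hand side in terms of $\psi_{\y_i,\lambda_i}$, I would obtain $\phi_t(\y_{i+1}) \leq \phi_t(\y_i) + \lambda_i\big(\psi_{\y_i,\lambda_i}(\w_i) - \R(\y_i)\big)$, up to collecting the $Q_t(\y_i)$ and $\R(\y_i)$ terms appropriately. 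Then invoke the WPO guarantee \eqref{eq:wpo}: $\psi_{\y_i,\lambda_i}(\w_i) \leq \psi_{\y_i,\lambda_i}(\x^*)$. Expanding $\psi_{\y_i,\lambda_i}(\x^*)$ and using convexity of $Q_t$ (so that $\langle \x^* - \y_i, \nabla Q_t(\y_i)\rangle \leq Q_t(\x^*) - Q_t(\y_i)$) should yield a recursion of the form
\begin{align*}
\phi_t(\y_{i+1}) - \phi_t(\x^*) \leq (1-\lambda_i)\big(\phi_t(\y_i) - \phi_t(\x^*)\big) + \frac{\lambda_i^2\tilde\beta}{2}\Vert{\x^* - \y_i}\Vert^2.
\end{align*}

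The next step is to control the ``error'' term $\tfrac{\lambda_i^2\tilde\beta}{2}\Vert{\x^* - \y_i}\Vert^2$. Here the two assumptions diverge. Under Assumption \ref{ass:sc}, $Q_t$ is $\alpha$-strongly convex (its Hessian is $\nabla^2 f(\x_t) \succeq \alpha\I$), and since $\x^*$ is \emph{not} the minimizer of $\phi_t$ one cannot directly bound $\Vert \x^* - \y_i\Vert^2$ by $\phi_t(\y_i) - \phi_t(\x^*)$; instead I expect the cleaner route is to use strong convexity of $Q_t$ around $\y_i$ together with the optimality-type inequality for $\w_i$ to show that the combined per-step bound telescopes geometrically with ratio $1 - \alpha/\tilde\beta$ once $\lambda_i = \alpha/\tilde\beta$ is plugged in — essentially the same computation as in the ``linearly convergent Frank--Wolfe via weak proximal oracle'' analyses of \cite{pmlr-v89-garber19a, garber2023faster}, adapted so that the target is $\x^*$ rather than the true minimizer of $\phi_t$. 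The base case $\phi_t(\y_1) - \phi_t(\x^*) = \phi_{\x_t,1}(\x_t) - \phi_{\x_t,1}(\x^*) \leq F(\x_t) - F^* + \tfrac{\beta_2}{6}\Vert{\x_t - \x^*}\Vert^3$ follows from Lemma \ref{lem:funie} (applied twice, as in the proof of Theorem \ref{thm:main}), giving \eqref{eq:thm:wpo1:res1}.

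Under Assumption \ref{ass:qg}, $Q_t = Q^r_{\x_t}$ is no longer strongly convex, so I cannot hope for pure geometric decay to $\phi_t(\x^*)$; instead the argument must produce an \emph{additive} residual. The mechanism: quadratic growth gives $\Vert{\x^* - \y_i}\Vert^2 \leq \tfrac{2}{\alpha}(F(\y_i) - F^*)$, but I need to relate $F(\y_i) - F^*$ back to $\phi_t(\y_i) - \phi_t(\x^*)$. Lemma \ref{lem:QGprox} does exactly this: $\phi^r_{\x_t,1}(\y_i) - \phi^r_{\x_t,1}(\x^*) \geq F(\y_i) - F^* - \tfrac{\beta_2}{3}\Vert{\x_t - \x^*}\Vert^3$, so $\Vert{\x^* - \y_i}\Vert^2 \leq \tfrac{2}{\alpha}\big(\phi_t(\y_i) - \phi_t(\x^*) + \tfrac{\beta_2}{3}\Vert{\x_t - \x^*}\Vert^3\big)$. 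Substituting into the recursion with $\lambda_i = \alpha/\tilde\beta$ gives, writing $\delta_i := \phi_t(\y_i) - \phi_t(\x^*)$,
\begin{align*}
\delta_{i+1} \leq \Big(1 - \frac{\alpha}{\tilde\beta} + \frac{\alpha}{\tilde\beta}\Big)\delta_i - \text{(good term)} + \frac{\alpha}{2\tilde\beta}\cdot\frac{\beta_2}{3}\Vert{\x_t-\x^*}\Vert^3,
\end{align*}
which is why the stated rate has ratio $1 - \alpha/(4\tilde\beta)$ (the factor $1/4$ absorbing the slack from the quadratic-growth substitution and the choice of how much of the $\lambda_i^2\tilde\beta\Vert\cdot\Vert^2/2$ term is charged against the contraction) plus the fixed additive term $\tfrac{\beta_2}{3}\Vert{\x_t - \x^*}\Vert^3$; unrolling the recursion and bounding the geometric sum of the residual by a constant times $\tfrac{\beta_2}{3}\Vert{\x_t-\x^*}\Vert^3$ yields \eqref{eq:thm:wpo1:res2}, with base case $\delta_1 = \phi^r_{\x_t,1}(\x_t) - \phi^r_{\x_t,1}(\x^*) \leq F(\x_t) - F^*$ (again via Lemma \ref{lem:funie}, noting the cubic regularizer vanishes at $\y_1 = \x_t$).

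I expect the main obstacle to be the bookkeeping in the strongly-convex case: because the oracle's guarantee is stated with respect to $\x^*$ (which is generally \emph{not} the minimizer of $\psi_{\y_i,\lambda_i}$ nor of $\phi_t$), one must carefully split the quadratic upper-bound term so that part of it cancels against an $\alpha$-strong-convexity lower bound on $\phi_t(\y_i) - \phi_t(\x^*)$ centered at $\x^*$, and the leftover is exactly what the $\lambda_i = \alpha/\tilde\beta$ choice kills — getting the constants to line up to the clean ratios $1 - \alpha/\tilde\beta$ and $1 - \alpha/(4\tilde\beta)$ rather than something messier is the delicate part. A secondary subtlety is verifying $\lambda_i = \alpha/\tilde\beta \in [0,1]$, which holds since $\alpha \leq \tilde\beta$ (as $\nabla^2 f \preceq \beta \I \preceq \tilde\beta\I$ in either regime).
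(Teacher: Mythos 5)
Your route is essentially the paper's: the descent lemma for $Q_t$ plus convexity of $\R$ gives $\phi_t(\y_{i+1}) \le \phi_t(\y_i) - \lambda_i\R(\y_i) + \lambda_i\psi_{\y_i,\lambda_i}(\w_i)$, then the oracle guarantee \eqref{eq:wpo} is invoked against $\x^*$; in the strongly convex case one uses the $\alpha$-strong-convexity inequality for $Q_t$ taken at $\y_i$ and evaluated at $\x^*$ (not ``centered at $\x^*$'' as your last paragraph says --- no optimality of $\x^*$ for $\phi_t$ or $\psi$ is needed), so that $\lambda_i=\alpha/\tilde{\beta}$ exactly cancels the term $\frac{\lambda_i}{2}(\lambda_i\tilde{\beta}-\alpha)\Vert{\x^*-\y_i}\Vert^2$ and yields the clean ratio $1-\alpha/\tilde{\beta}$; the base cases via Lemma \ref{lem:funie} are exactly as you state. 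That half of your sketch is correct and matches the paper. A small omission: the oracle also guarantees $\psi_{\y_i,\lambda_i}(\w_i)\le\psi_{\y_i,\lambda_i}(\y_i)=\R(\y_i)$, which the paper uses to get monotonicity of $\phi_t(\y_i)$ and thereby restrict attention to iterations with $\phi_t(\y_i)>\phi_t(\x^*)$; worth adding for completeness.

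The one place your sketch does not close is the quadratic-growth recursion. Substituting $\Vert{\x^*-\y_i}\Vert^2\le\frac{2}{\alpha}\left({F(\y_i)-F^*}\right)$ and then converting \emph{all} of $F(\y_i)-F^*$ via Lemma \ref{lem:QGprox} gives $\delta_{i+1}\le\left({1-\lambda_i+\frac{\lambda_i^2\tilde{\beta}}{\alpha}}\right)\delta_i+\frac{\lambda_i^2\tilde{\beta}}{\alpha}\cdot\frac{\beta_2}{3}\Vert{\x_t-\x^*}\Vert^3$, and with $\lambda_i=\alpha/\tilde{\beta}$ --- the step size you plug in, following the theorem statement --- the coefficient of $\delta_i$ is exactly $1$: no contraction. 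This is precisely the unspecified ``good term'' in your display, and as written the argument would not produce the $1-\alpha/(4\tilde{\beta})$ rate. The paper's proof resolves it by actually running this case with $\lambda_i=\alpha/(2\tilde{\beta})$: it splits $\lambda_i\left({\phi_t(\x^*)-\phi_t(\y_i)}\right)$ into two halves, applies Lemma \ref{lem:QGprox} only to one half, and chooses $\lambda_i$ so that $\frac{\lambda_i^2\tilde{\beta}}{\alpha}-\frac{\lambda_i}{2}=0$, leaving contraction factor $1-\frac{\alpha}{4\tilde{\beta}}$ and per-step residual $\frac{\lambda_i\beta_2}{6}\Vert{\x_t-\x^*}\Vert^3$, whose geometric sum is $\frac{\beta_2}{3}\Vert{\x_t-\x^*}\Vert^3$. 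So to make your plan rigorous you must retain half of the linear decrease (equivalently take $\lambda_i\le\alpha/(2\tilde{\beta})$); note this also exposes a mismatch between the theorem's stated step size for the QG case and the paper's own proof, which your bookkeeping simply inherits.
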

Thus, we can conclude that in both scenarios (Assumption \ref{ass:sc} or Assumption \ref{ass:qg}), by running Algorithm \ref{alg:wpo:1} for sufficiently-many iterations ($\tilde{O}(\tilde{\beta}/\alpha)$, where $\tilde{O}(\cdot)$ suppresses logarithmic factors), we can implement a W(R)PNO with parameter $C_{\mA} = O(\beta_2)$.

\begin{proof}[Proof of Theorem \ref{thm:wpo:1}]
Fix some iteration $i$ of Algorithm \ref{alg:wpo:1}. Since $Q_t(\cdot)$ is $\tilde{\beta}$-smooth and $\R(\cdot)$ is convex, we have that
\begin{align}\label{eq:thm:wpo:1}
\phi_t(\y_{i+1}) &= Q_t((1-\lambda_i)\y_i + \lambda_i\w_i) + \R((1-\lambda_i)\y_i + \lambda_i\w_i) \nonumber\\
&\leq Q_t(\y_i) + \lambda_i\langle{\w_i - \y_i, \nabla{}Q_t(\y_i)}\rangle + \frac{\lambda_i^2\tilde{\beta}}{2}\Vert{\w_i- \y_i}\Vert^2 \nonumber\\
&~~~+ (1-\lambda_i)\R(\y_i) + \lambda_i\R(\w_i)\nonumber \\
&= \phi_t(\y_i) - \lambda_i\R(\y_i) + \lambda_i\psi_{\y_i,\lambda_i}(\w_i).
\end{align}
By definition we have that $\psi_{\y_i,\lambda_i}(\w_i) \leq \psi_{\y_i,\lambda_i}(\y_i) = \R(\y_i)$, which implies that $\phi_t(\y_{i+1})  \leq  \phi_t(\y_i)$. In particular, if for some iteration $i_0$ we have that $\phi_t(\y_{i_0}) \leq \phi_t(\x^*)$ then also for all $i > i_0$ we have that $\phi_t(\y_{i}) \leq \phi_t(\x^*)$. Thus, for the remaining of the proof we shall thus assume  $\phi_t(\y_i) > \phi_t(\x^*)$. 

Continuing from the RHS of \eqref{eq:thm:wpo:1} and using the definition of $\w_i$, we have that
\begin{align}\label{eq:thm:wpo:2}
\phi_t(\y_{i+1})  
& \leq \phi_t(\y_i) - \lambda_i\R(\y_i)+ \lambda_i\psi_{\y_i,\lambda_i}(\x^*) \nonumber \\
&= \phi_t(\y_i) - \lambda_i\R(\y_i) + \lambda_i\langle{\x^* - \y_i, \nabla{}Q_t(\y_i)}\rangle + \frac{\lambda_i^2\tilde{\beta}}{2}\Vert{\x^*- \y_i}\Vert^2 + \lambda_i\R(\x^*).
\end{align}

We now consider two cases. If Assumption \ref{ass:sc} holds, then $Q_t(\cdot)$ is $\alpha$-strongly convex which implies that
\begin{align*}
\langle{\x^* - \y_i, \nabla{}Q_t(\y_i)}\rangle \leq Q_t(\x^*) - Q_t(\y_i) - \frac{\alpha}{2}\Vert{\x^*-\y_i}\Vert^2.
\end{align*}
Plugging this into \eqref{eq:thm:wpo:2} and subtracting $\phi_t(\x^*)$ from both sides, we obtain
\begin{align*}
\phi_t(\y_{i+1}) -\phi_t(\x^*) &\leq (1-\lambda_i)(\phi_t(\y_i) - \phi_t(\x^*)) + \frac{\lambda_i}{2}\left({\lambda_i\tilde{\beta} - \alpha}\right)\Vert{\x^*-\y_i}\Vert^2.
\end{align*}
In particular, for $\lambda_i = \alpha/\tilde{\beta}$ we obtain
\begin{align*}
\phi_t(\y_{i+1}) -\phi_t(\x^*) &\leq \left({1-\frac{\alpha}{\tilde{\beta}}}\right)(\phi_t(\y_i) - \phi_t(\x^*)).
\end{align*}
Result \eqref{eq:thm:wpo1:res1} now follows by recalling that, under Assumption \ref{ass:sc}, we have $\phi_t = \phi_{\x_t,1}$ and so,
\begin{align*}
\phi_t(\y_1) - \phi_t(\x^*) &=  Q_t(\x_t) + \R(\x_t) - Q_t(\x^*) - \R(\x^*) \\
&= \R(\x_t) - Q_t(\x^*) - \R(\x^*) \\
&\leq f(\x_t) - f(\x^*) + \frac{\beta_2}{6}\Vert{\x_t-\x^*}\Vert^3 + \R(\x_t) - \R(\x^*) \\
&= F(\x_t) - F^* + \frac{\beta_2}{6}\Vert{\x_t-\x^*}\Vert^3,
\end{align*}
where the inequality follows from Lemma \ref{lem:funie}.\\

Now we turn to consider the case that Assumption \ref{ass:qg} holds and to prove result \eqref{eq:thm:wpo1:res2}. We proceed from Eq. \eqref{eq:thm:wpo:2} via a slightly different route. Using the convexity of $Q_t(\cdot)$ and the quadratic growth of $F(\cdot)$, we have that
\begin{align*}
\phi_t(\y_{i+1})  &\leq  \phi_t(\y_i) -\lambda_i\R(\y_i) + \lambda_i\left({Q_t(\x^*)-Q_t(\y_i)}\right) + \frac{\lambda_i^2\tilde{\beta}}{\alpha}\left({F(\y_i) -F^*}\right) + \lambda_i\R(\x^*) \\
&=\phi_t(\y_i) + \lambda_i(\phi_t(\x^*) - \phi_t(\y_i)) + \frac{\lambda_i^2\tilde{\beta}}{\alpha}\left({F(\y_i) -F^*}\right). 
\end{align*}

Recall that under Assumption \ref{ass:qg} we have that $\phi_t = \phir_{\x_t,1}$. Thus, by  Lemma \ref{lem:QGprox} we have that,
\begin{align*}
\phi_t(\y_{i+1})  &\leq  \phi_t(\y_i) + \frac{\lambda_i}{2}\left({\phi_t(\x^*)-\phi_t(\y_i)}\right)  + \frac{\lambda_i}{2}\left({\phi_t(\x^*)-\phi_t(\y_i)}\right) + \frac{\lambda_i^2\tilde{\beta}}{\alpha}\left({F(\y_i) -F^*}\right) \\
&\leq  \phi_t(\y_i) + \frac{\lambda_i}{2}\left({\phi_t(\x^*)-\phi_t(\y_i)}\right) \\
&+ \frac{\lambda_i\beta_2}{6}\Vert{\x-\x^*}\Vert^3 + \left({\frac{\lambda_i^2\tilde{\beta}}{\alpha} - \frac{\lambda_i}{2}}\right)\left({F(\y_i) - F^*}\right).
\end{align*}

Setting $\lambda_i = \frac{\alpha}{2\tilde{\beta}}$ and subtracting $\phi_t(\x^*)$ from both sides we obtain,
\begin{align*}
\phi_t(\y_{i+1}) - \phi_t(\x^*) &\leq \left({\phi_t(\y_i) - \phi_t(\x^*)}\right)\left({1- \frac{\alpha}{4\tilde{\beta}}}\right) + \frac{\alpha\beta_2}{12\tilde{\beta}}\Vert{\x_t-\x^*}\Vert^3\\
 &\leq \left({\phi_t(\y_1)-\phi_t(\x^*)}\right)\left({1- \frac{\alpha}{4\tilde{\beta}}}\right)^i  + \frac{1}{1-\left({1-\frac{\alpha}{4\tilde{\beta}}}\right)}\frac{\alpha\beta_2}{12\tilde{\beta}}\Vert{\x_t-\x^*}\Vert^3 \\
&= \left({\phi_t(\y_1)-\phi_t(\x^*)}\right)\left({1- \frac{\alpha}{4\tilde{\beta}}}\right)^i  + \frac{\beta_2}{3}\Vert{\x_t-\x^*}\Vert^3.
\end{align*}
The proof is completed by recalling that, under Assumption \ref{ass:qg} we have $\phi_t = \phir_{\x_t,1}$, and so
\begin{align*}
\phi_t(\y_1) - \phi_t(\x^*) & = Q_t(\x_t) + \R(\x_t) - Q_t(\x^*) - \R(\x^*) \\
& = \R(\x_t) - Q_t(\x^*) - \R(\x^*) \\
&\leq f(\x_t) - f(\x^*) + \R(\x_t) - \R(\x^*) = F(\x_t) - F(\x^*),
\end{align*}
where the inequality follows from Lemma \ref{lem:funie}.
\end{proof}

\subsection{Efficient implementation of first-order WPOs}\label{sec:fowpo:imp}
We now discuss concrete cases in which a first-order WPO could indeed be implemented very efficiently by leveraging the sparsity of $\x^*$.
\subsubsection{Symmetric subsets of $\reals^n$}\label{sec:fowpo:vecs}

\begin{assumption}\label{ass:symset}
$\R(\x)$ is the indicator function for a convex and closed subset  $\mK\subset\reals^n$ that is closed under permutation over coordinates. Furthermore, either $\mK$ is non-negative (i.e., $\mK\subseteq\reals^n_+$), or $\mK$ is closed under coordinate-wise sign-flips.
\end{assumption}

Note assumption \ref{ass:symset} covers $\ell_p$ balls in $\reals^n$ (for $p \geq 1$),  their restrictions to the non-negative orthant, and  the unit simplex. 

Let us introduce some useful notation. For any subset of indices $T\subseteq[n]$ and  $\x\in\reals^n$, we denote the restriction of $\x$ to $T$ as the vector $\x_T\in\reals^{|T|}$ such that for all $i\in\{1,\dots,|T|\}$, $\x_T(i) = \x(T_i)$, where $T_i$ denotes the ith element in $T$. We  denote the $n\times{}|T|$ matrix $\I_T$ which is obtained by deleting from the $n\times n$ identity matrix all columns which are not indexed by $T$. Thus, $\I_T\x_T$ maps $\x_T\in\reals^{|T|}$ to $\x\in\reals^n$ by setting each entry $i\in[n]$ in $\x$ to $[\x_T]_j$ in case the jth element in $T$ is $i$, and zero in case $i\notin{}T$.   Accordingly, we denote the restriction of $\mK$ to $T$ as the set $\mK_T = \{\x_T\in\reals^{|T|}~|~\I_T\x_T\in\mK\}$, and we let $\Pi_{\mK_T}[\cdot]$ denote the Euclidean projection onto $\mK_T$. In the following we let $\nnz(\x^*)$ denote the number of non-zero entries in the unique optimal solution to Problem \eqref{eq:optprob}.

\begin{lemma}\label{lem:wpoVecImp}
Suppose Assumption \ref{ass:symset} holds. Let $s\in[n]$ such that $s\geq \nnz(\x^*)$. Fix some iteration $i$ of Algorithm \ref{alg:wpo:1}, denote $\psi = \psi_{\y_i,\lambda_i}$ (see Eq. \eqref{eq:wpo:func})  and set
$\z := \y_i - \frac{1}{\lambda_i\tilde{\beta}}\nabla{}Q_t(\y_i)$.

If $\mK$ is  non-negative, letting $T\subset[n]$ be a set of  $s$ largest (signed) entries in $\z$, and defining $\z' = \I_T\Pi_{\mK_T}[\z_T]$, we have that $\w\in\arg\min_{\u\in\{\y_i,\z'\}}\psi(\u)$ satisfies Eq. \eqref{eq:wpo} (that is, $\w$ is a valid output of the first-order weak proximal oracle (Definition \ref{def:wpo}), when called with inputs $\y_i,\lambda_i$).

If $\mK$ is closed to coordinate-wise sign-flips,  letting $T\subset[n]$ be a set of $s$ largest in absolute value entries in $\z$, and defining $\z' = \I_T\Pi_{\mK_T}[\z_T]$, we have that $\w\in\arg\min_{\u\in\{\y_i,\z'\}}\psi(\u)$ satisfies Eq. \eqref{eq:wpo} 
\end{lemma}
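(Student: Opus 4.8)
The goal is to show that the point $\w$ defined by projecting a thresholded version of $\z$ onto the appropriate restricted set is a valid output of the first-order weak proximal oracle, i.e., $\psi(\w)\leq\min\{\psi(\x^*),\psi(\y_i)\}$. Since we take $\w$ to be the better of $\y_i$ and $\z'$, the bound $\psi(\w)\leq\psi(\y_i)$ is automatic, so the entire content is to prove $\psi(\z')\leq\psi(\x^*)$ --- or more precisely, $\min\{\psi(\y_i),\psi(\z')\}\leq\psi(\x^*)$.

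The plan is to first rewrite $\psi$ in a form that exhibits it as a (scaled) Euclidean projection problem. Completing the square in $\psi_{\y_i,\lambda_i}(\u) = \langle \u-\y_i,\nabla Q_t(\y_i)\rangle + \frac{\lambda_i\tilde\beta}{2}\|\u-\y_i\|^2 + \R(\u)$ and recalling $\R$ is the indicator of $\mK$, one gets that for $\u\in\mK$, $\psi(\u) = \frac{\lambda_i\tilde\beta}{2}\|\u - \z\|^2 + \text{const}$, where $\z = \y_i - \frac{1}{\lambda_i\tilde\beta}\nabla Q_t(\y_i)$ is exactly the point in the lemma statement. So minimizing $\psi$ over $\mK$ is equivalent to finding the Euclidean projection of $\z$ onto $\mK$, and it suffices to show that $\min\{\|\y_i-\z\|^2,\|\z'-\z\|^2\} \leq \|\x^*-\z\|^2$. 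The key structural fact we exploit is that $\x^*$ has at most $s$ nonzero coordinates.

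The heart of the argument is a support/rearrangement lemma: among all vectors in $\mK$ supported on a given index set $S$ with $|S|\leq s$, the best one can approximate $\z$ is controlled, and moreover there is an \emph{optimal} support that consists of the coordinates of $\z$ that are largest (in absolute value, or largest signed for the non-negative case). Concretely, I would argue as follows. Since $\x^*$ is supported on some set $S^*$ with $|S^*|\leq s$, we have $\|\x^*-\z\|^2 = \|\x^*_{S^*} - \z_{S^*}\|^2 + \sum_{j\notin S^*}\z(j)^2 \geq \min_{\u\in\mK_{S^*}}\|\u - \z_{S^*}\|^2 + \sum_{j\notin S^*}\z(j)^2$. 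Now I want to replace $S^*$ by the "greedy" set $T$ of $s$ largest entries. In the non-negative case: since $\mK$ is closed under coordinate permutations and $\mK\subseteq\reals^n_+$, the projection $\Pi_{\mK_{S^*}}[\z_{S^*}]$ only ever uses the positive part of $\z_{S^*}$ (projecting a coordinate with $\z(j)\leq 0$ onto a nonnegative feasible region gives a value no worse than setting it to $0$ and "charging" $\z(j)^2$), and by the permutation-invariance of $\mK$ one can relabel so that the support lines up with the top-$s$ signed entries $T$; a swapping argument (replace a coordinate in $S^*$ not in $T$ by one in $T\setminus S^*$, which has a larger signed value, and check the projection error does not increase) shows $\min_{\u\in\mK_{S^*}}\|\u-\z_{S^*}\|^2 + \sum_{j\notin S^*}\z(j)^2 \geq \min_{\u\in\mK_T}\|\u-\z_T\|^2 + \sum_{j\notin T}\z(j)^2 = \|\z'-\z\|^2$. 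The sign-flip case is analogous but uses absolute values: because $\mK$ is closed to coordinate-wise sign flips, $\Pi_{\mK_{S^*}}[\z_{S^*}]$ in coordinate $j$ has the same sign as $\z(j)$ and magnitude depending only on $|\z_{S^*}|$, so the relevant quantity is a function of $\{|\z(j)|\}_{j\in S^*}$ and is minimized by taking $T$ to be the $s$ largest in absolute value.

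I would structure the write-up as: (1) the completing-the-square identity reducing $\psi$-minimization to projection onto $\mK$; (2) the observation that $\w = \arg\min_{\u\in\{\y_i,\z'\}}\psi(\u)$ makes $\psi(\w)\leq\psi(\y_i)$ free, and that $\psi(\z')$ equals (up to the additive constant) $\frac{\lambda_i\tilde\beta}{2}\|\z'-\z\|^2$ since $\z'\in\mK$ by construction (the projection onto $\mK_T$ lifted back to $\reals^n$ lies in $\mK$ because $\I_T\Pi_{\mK_T}[\z_T]\in\mK$ by definition of $\mK_T$); (3) the support-comparison lemma showing $\|\z'-\z\|^2\leq\|\x^*-\z\|^2$, split into the non-negative and sign-flip cases. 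The main obstacle is step (3), specifically the rigorous swapping/exchange argument that the greedy top-$s$ support is at least as good as $\x^*$'s support $S^*$ --- one must be careful that the projection onto the restricted set $\mK_{S^*}$ (which is not a product set, e.g. for an $\ell_1$ ball it couples the coordinates) still satisfies the needed monotonicity under swapping a small-magnitude coordinate for a large-magnitude one. I expect this to go through cleanly using permutation-invariance of $\mK$: relabeling coordinates is an isometry that maps $\mK_{S^*}$ to $\mK_{\sigma(S^*)}$, so WLOG one compares supports of equal size and reduces to a coordinate-wise exchange where the claim is that shrinking the "tail" coordinate that gets zeroed out and is charged $\z(j)^2$ is dominated by the cost on the larger coordinate now included; the projection value at an included coordinate is at most $|\z(j)|$ away, so including the larger coordinate can only help. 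I would also double-check the degenerate subtlety that $\nnz(\x^*)$ could be strictly less than $s$, in which case $S^*$ can be padded to size $s$ arbitrarily without changing $\x^*$, so the comparison is still against a size-$s$ support.
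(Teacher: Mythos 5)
Your proposal is correct and follows the paper's reduction exactly up to the key step: both complete the square to recast the $\psi$-comparison as the Euclidean comparison $\Vert{\z'-\z}\Vert^2 \leq \Vert{\x^*-\z}\Vert^2$, note that $\psi(\w)\le\psi(\y_i)$ is free, and exploit $\x^*\in\mK_s:=\mK\cap\{\x\in\reals^n~|~\nnz(\x)\le s\}$. Where you diverge is in how that Euclidean comparison is established: the paper simply invokes the result of \cite{beck2016minimization} (their Algorithms 2 and 3), namely that taking the greedy top-$s$ support $T$ and projecting onto the restriction $\mK_T$ yields an exact Euclidean projection onto the sparse symmetric set $\mK_s$, whereas you re-derive (a sufficient version of) this fact from scratch via a permutation/sign-flip exchange argument over supports. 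Your route is self-contained and in fact proves more than needed (optimality of $\z'$ over all of $\mK_s$, not merely domination of $\x^*$), at the cost of carrying out the exchange rigorously; the paper's route is a two-line citation. Your exchange step does go through, but as written its justification (``the projection value at an included coordinate is at most $|\z(j)|$ away'') is not the inequality that closes it. The clean version: take a minimizer $\v\in\mK_{S}$ of the restricted distance, use sign-flip invariance to assume $\v(j)$ shares the sign of $\z(j)$ (in the non-negative case $\v(j)\ge 0$ automatically), move its value $a=|\v(j)|\ge 0$ from the discarded coordinate $j$ to the newly included coordinate $k$ with sign matched to $\z(k)$ --- this stays feasible for $\mK_{S'}$ by permutation and sign-flip invariance --- and observe that the only changed terms satisfy $(a-c)^2+b^2\le(a-b)^2+c^2$ whenever $c\ge b$ (with $b,c$ the relevant signed or absolute values of $\z(j),\z(k)$), which reduces to $a(c-b)\ge 0$. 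With that computation spelled out, together with the padding of the support of $\x^*$ to size $s$ that you already note, your argument is complete and constitutes a valid, more elementary alternative to the citation used in the paper.
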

\begin{proof}
Note that in both cases it suffices to prove that $\psi(\z') \leq \psi(\x^*)$.
First note that in both cases of the lemma $\z'$ is feasible w.r.t the indicator function $\R$, i.e., $\R(\z')=\R(\x^*)=0$. Thus, 
from the definition of $\psi$ (Eq. \eqref{eq:wpo:func})  we can see that $\psi(\z') \leq \psi(\x^*)$ if and only if $\Vert{\z'-\z}\Vert^2 \leq \Vert{\x^*-\z}\Vert^2$. Denote $\mK_s:=\mK\cap\{\x\in\reals^n~|~\nnz(\x)\leq s\}$ and note that under the assumption of the lemma, $\x^*\in\mK_s$. 

In both cases of the lemma it  follows from \cite{beck2016minimization} that $\z'$ satisfies $\z'\in\arg\min_{\p\in\mK_s}\Vert{\p-\z}\Vert$ (see Algorithm 2 in \cite{beck2016minimization} for the first case and Algorithm 3 for the second). Thus,  we indeed  have that $\Vert{\z'-\z}\Vert^2 \leq \Vert{\x^*-\z}\Vert^2$.
\end{proof}

Let us now discuss  concrete computational implications of Lemma \ref{lem:wpoVecImp} for our Problem \eqref{eq:optprob}.

\begin{corollary}\label{cor:wpo1:vec}
Suppose Assumption \ref{ass:symset} holds and that $\nnz(\x^*) \leq s$ for some $s << n$. Algorithm \ref{alg:wpo:1} admits an implementation based on Lemma \ref{lem:wpoVecImp} such that given the initial information $\x_t, \nabla{}f(\x_t), \nabla^2f(\x_t)\x_t$, each iteration requires:
\begin{enumerate}
\item
Single projection computation of  a $s$-dimensional vector onto a $s$-dimensional restriction of the set $\mK$ (e.g., if $\mK$ is an $\ell_p$ ball in $\reals^n$, then the projection is onto the $\ell_p$ ball in $\reals^s$).
\item
Single computation of the product of $\nabla^2f(\x_t)$ with a $s$-sparse vector\footnote{i.e., a vector in $\reals^n$ with at most $s$ non-zero entries}
\item
Additional $O(n\log{}s)$ time. 
\end{enumerate}
\end{corollary}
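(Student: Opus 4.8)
The plan is to track, across one inner iteration of Algorithm~\ref{alg:wpo:1}, exactly which operations are performed by the implementation from Lemma~\ref{lem:wpoVecImp}, and to argue that each of the three claimed costs suffices once the initial data $\x_t,\nabla f(\x_t),\nabla^2 f(\x_t)\x_t$ are available. The central observation is that every iterate $\y_i$ produced by Algorithm~\ref{alg:wpo:1} is $s$-sparse: indeed $\y_1=\x_t$ need not be sparse, but starting from $\y_2$ each $\y_{i+1}=(1-\lambda_i)\y_i+\lambda_i\w_i$ with $\w_i\in\{\y_i,\z'\}$ and $\z'=\I_T\Pi_{\mK_T}[\z_T]$ supported on a set $T$ of size $s$; so $\supp(\y_{i+1})\subseteq\supp(\y_i)\cup T$, and by induction (using that the support sets in consecutive iterations can be kept inside a common size-$s$ set, since $\z'$ is itself $s$-sparse and $\y_i$ is $s$-sparse) one gets $\nnz(\y_i)\le s$ for $i\ge 2$. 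The first iteration $i=1$ is handled separately below.

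First I would treat the gradient evaluation. We have $\nabla Q_t(\y_i)=\nabla f(\x_t)+\nabla^2 f(\x_t)(\y_i-\x_t)$ in the Assumption~\ref{ass:sc} case, and the same plus the cubic correction $\tfrac{\beta_2}{2}\Vert\y_i-\x_t\Vert(\y_i-\x_t)$ in the Assumption~\ref{ass:qg} case. Write $\nabla^2 f(\x_t)(\y_i-\x_t)=\nabla^2 f(\x_t)\y_i-\nabla^2 f(\x_t)\x_t$; the second term is given as initial data, and the first is a product of the Hessian with the $s$-sparse vector $\y_i$ (for $i\ge 2$). This is the single sparse Hessian–vector product of item~2. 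For $i=1$ we instead need $\nabla^2 f(\x_t)(\y_1-\x_t)=\nabla^2 f(\x_t)\cdot\mathbf 0=\mathbf 0$, so no Hessian product is needed in the first iteration at all; the first genuine sparse product occurs at $i=2$, which is consistent with the amortized ``one per iteration'' count (and one may simply state the corollary as holding for $i\ge 2$, or absorb $i=1$'s cost into the ``initial information'' already supplied). The cubic term, when present, costs only $O(s)$ since $\y_i-\x_t$ is $s$-sparse.

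Next I would account for forming $\z=\y_i-\tfrac{1}{\lambda_i\tilde\beta}\nabla Q_t(\y_i)$, selecting $T$, and computing $\z'$. Given $\nabla Q_t(\y_i)$ as a dense $n$-vector, forming $\z$ is $O(n)$; selecting the $s$ largest (signed, resp. absolute-value) entries of $\z$ is a partial selection/sort costing $O(n\log s)$ (e.g. maintain a size-$s$ heap), which is item~3; extracting $\z_T\in\reals^s$ is $O(s)$. Then $\Pi_{\mK_T}[\z_T]$ is the single projection of an $s$-dimensional vector onto the $s$-dimensional restriction $\mK_T$ of $\mK$, which is item~1; by Assumption~\ref{ass:symset} and permutation-closedness, $\mK_T$ is just the ``same type'' of set in $\reals^s$ (e.g. an $\ell_p$-ball in $\reals^s$), so this is a genuinely $s$-dimensional, not $n$-dimensional, projection. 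Finally, to decide $\w\in\arg\min_{\u\in\{\y_i,\z'\}}\psi(\u)$ we compare $\psi(\y_i)=\R(\y_i)$ against $\psi(\z')=\langle\z'-\y_i,\nabla Q_t(\y_i)\rangle+\tfrac{\lambda_i\tilde\beta}{2}\Vert\z'-\y_i\Vert^2+\R(\z')$; since $\z'-\y_i$ is supported on a set of size $\le 2s$ (union of the two supports, or $\le s$ if supports are kept nested), this inner product and norm cost $O(s)$, and $\R(\z')=0$ is immediate from feasibility established in Lemma~\ref{lem:wpoVecImp}. Summing: $O(n)+O(n\log s)+O(s)$ plus one $s$-dimensional projection plus one sparse Hessian product, which is exactly items~1--3.

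The main obstacle, and the one deserving care in the write-up, is the sparsity-propagation claim for the $\y_i$ together with keeping the per-iteration bookkeeping at $O(n\log s)$ rather than letting it creep up: one must be slightly careful that $\w_i=\y_i$ is allowed (so $\y_{i+1}$ could stay on the old support) and that $\w_i=\z'$ replaces the iterate with a \emph{fresh} $s$-sparse vector, so the support does not grow unboundedly over $i$ — it is bounded by $s$ at every step $i\ge 2$. A clean way to phrase this is: maintain $\y_i$ in sparse form together with the dense vector $\nabla^2 f(\x_t)\y_i$ updated by a single sparse Hessian product each iteration; everything else (forming $\z$, the top-$s$ selection, the restricted projection, the $\psi$-comparison, the convex combination to get $\y_{i+1}$) is then either $O(n\log s)$ bookkeeping or $O(s)$ work or the one declared $s$-dimensional projection. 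I would also remark that the $n\log s$ term is unavoidable merely to read/update the dense gradient and to perform the selection, and cannot in general be reduced, which motivates stating it explicitly as item~3.
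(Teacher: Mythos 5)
There is a genuine gap: your ``central observation'' that the iterates are $s$-sparse is false. Algorithm \ref{alg:wpo:1} sets $\y_{i+1}=(1-\lambda_i)\y_i+\lambda_i\w_i$ with a step-size $\lambda_i<1$ in general, so when $\w_i=\z'$ the new iterate is \emph{not} replaced by the fresh $s$-sparse point; its support is $\supp(\y_i)\cup\supp(\z')$. Since $\y_1=\x_t$ need not be sparse, already $\y_2$ can be dense, and even with a sparse start the support can grow by up to $s$ new coordinates per iteration because the selected sets $T$ change. Consequently the inductive claim $\nnz(\y_i)\le s$ for $i\ge 2$, and everything you hang on it --- writing $\nabla^2f(\x_t)\y_i$ as a Hessian--$s$-sparse product, the ``$O(s)$'' cost of the cubic correction $\frac{\beta_2}{2}\Vert\y_i-\x_t\Vert(\y_i-\x_t)$, the ``$O(s)$'' cost of evaluating $\psi$ at $\z'$ via a support of size $\le 2s$, and ``maintain $\y_i$ in sparse form'' --- does not hold as stated.

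The corollary survives, but by a different (and simpler) bookkeeping, which is what the paper intends: sparsity of the \emph{iterates} is never needed, only sparsity of $\z'$. Maintain the dense vector $\nabla^2f(\x_t)(\y_i-\x_t)$ (equivalently $\nabla{}Q_t(\y_i)$) recursively: if $\w_i=\y_i$ then $\y_{i+1}=\y_i$ and nothing is recomputed, and if $\w_i=\z'$ then
\begin{align*}
\nabla^2f(\x_t)(\y_{i+1}-\x_t)=(1-\lambda_i)\,\nabla^2f(\x_t)(\y_i-\x_t)+\lambda_i\left({\nabla^2f(\x_t)\z'-\nabla^2f(\x_t)\x_t}\right),
\end{align*}
which costs exactly one product of $\nabla^2f(\x_t)$ with the $s$-sparse $\z'$ (the term $\nabla^2f(\x_t)\x_t$ is part of the initial information) plus $O(n)$ vector arithmetic; at $i=1$ the maintained vector is $\mathbf{0}$, as you note. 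All the steps you priced at $O(s)$ via the false sparsity (cubic correction, the $\psi$-comparison between $\y_i$ and $\z'$) are simply $O(n)$ with dense vectors, which fits inside the stated $O(n\log s)$ per-iteration budget alongside forming $\z$ and the top-$s$ selection; the single $s$-dimensional projection $\Pi_{\mK_T}[\z_T]$ is as you describe. With this correction your argument coincides with the paper's proof.
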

\begin{proof}
On each iteration  $i$ of Algorithm \ref{alg:wpo:1}, given the gradient $\nabla{}Q_t(\y_i)$, computing the point $\z'$ (as defined in Lemma  \ref{lem:wpoVecImp}) requires finding the $s$ largest entries in an $n$-dimensional array, which takes $O(n\log{}s)$ time and to project a $s$-dimensional vector onto the corresponding $s$-dimensional restriction of $\mK$. Since $\z'$ is $s$-sparse, a straightforward inspection reveals that the time to compute the next gradient direction $\nabla{}Q_t(\y_{i+1})$ is dominated by the time to multiply the Hessian $\nabla^2f(\x_t)$ with a $s$-sparse vector.
\end{proof}

\paragraph{Example: sparse optimization with an  $\ell_1$ constraint.} A classical example in which $\x^*$ is expected to be sparse is when $\R(\cdot)$ is the indicator function for a $\ell_1$ ball in $\reals^n$. Table \ref{table:fowpo:1} compares the complexity of Algorithm \ref{alg:wpo:1} to that of state-of-the-art methods such as accelerated gradient methods \cite{nesterov2013gradient, beck2009fast}  (which require in worst-case computing products of the Hessian with dense vectors to evaluate the gradient $\nabla{}Q_t(\cdot)$) and away-step Frank-Wolfe \cite{lacoste2015global} (which require products of the Hessian with only 1-sparse vectors but whose worst-case convergence rate scales at least linearly with the dimension $n$). We can see that in a meaningful regime of parameters, namely when $\tilde{\beta}/\alpha << (n/s)^2$, Algorithm \ref{alg:wpo:1} can indeed be considerably faster.

\begin{table*}\renewcommand{\arraystretch}{1.3}
{\footnotesize
\begin{center}
\begin{tabular}{ | p{16em} | p{8em} | p{5em}|} 
  \hline
  Algorithm &  time per iteration & \#{}iterations   \\\hline
  Algorithm  \ref{alg:wpo:1} & $sn$  &$\tilde{\beta}/\alpha$   \\ \hline
  Accelerated Gradient methods \cite{nesterov2013gradient, beck2009fast} & $n^2$  & $\sqrt{\tilde{\beta}/\alpha}$   \\ \hline
  Away-step Frank-Wolfe  \cite{lacoste2015global} & $n$  &  $n\tilde{\beta}/\alpha$  \\ \hline
\end{tabular}\caption{Comparison for solving subproblem of minimizing $\phi_t(\x)$ when $\R$ is an indicator function for a $\ell_1$ ball in $\reals^n$, when the Hessian $\nabla^2f(\x_t)$ is explicitly given and $\nnz(\x^*) \leq s << n$. Universal constants and logarithmic factors are suppressed.}\label{table:fowpo:1}
\end{center}}
\vskip -0.2in
\end{table*}\renewcommand{\arraystretch}{1}


\subsubsection{Spectrally-symmetric subsets of matrix spaces}\label{sec:matrices}

In the following, given positive integers $m,n$ and $\z\in\reals^{\min\{m,n\}}$, we let $\diag(\z)$ denote the $m\times n$ diagonal matrix with $\z$ as its main diagonal.

\begin{assumption}\label{ass:matrixset}
$\R(\cdot)$ is the indicator function for a closed and convex subset $\mM\subset\reals^{m\times n}$,  $\mM = \{\U\diag(\sigma)\V^{\top} ~|~ \U^{\top}\U = \I, ~\V^{\top}\V=\I,~\sigma\in\mK\}$, where $\mK$ is a convex and compact subset of $\reals^{\min\{m,n\}}$ that is non-negative (i.e., $\mK\subseteq\reals^{\min\{m,n\}}_+$) and closed under permutation over coordinates.  
\end{assumption}
Note Assumption \ref{ass:matrixset} covers Schatten $p$-norm balls in $\reals^{m\times n}$ (matrices in $\reals^{m\times n}$ with singular values lying in the corresponding  $\ell_p$ ball in $\reals^{\min\{m,n\}}$).

\begin{lemma}\label{lem:wpoMatImp}
Suppose Assumption \ref{ass:matrixset} holds and  write the SVD of $\x^*$ as $\x^*=\U^*\diag(\sigma^*)\V^{*\top}$ with $\sigma^*\in\mK$. Let $s\in[\min\{m,n\}]$ such that $\nnz(\sigma^*)\leq s$ (i.e., $\rank(\x^*) \leq s$). Fix some iteration $i$ of Algorithm \ref{alg:wpo:1} and denote $\psi = \psi_{\y_i,\lambda_i}$ (see Eq. \eqref{eq:wpo:func}. Denote  $\z = \y_i - \frac{1}{\lambda_i\tilde{\beta}}\nabla{}Q_t(\y_i)$ and let $\z=\U_{\z}\diag(\sigma_{\z})\V_{\z}^{\top}$ denote its SVD. Let $T\subset[\min\{m,n\}]$ be a set of  $s$ largest entries in $\sigma_{\z}$, set $\sigma_{\z}' = \I_T\Pi_{\mK_T}[\z_T]\in\mK$ and $\z'=\U_{\z}\diag(\sigma_{\z}')\V_{\z}^{\top}$. Then, $\w\in\arg\min_{\u\in\{\y_i,\z'\}}\psi(\u)$ satisfies Eq. \eqref{eq:wpo} (that is, $\w$ is a valid output of the first-order weak proximal oracle (Definition \ref{def:wpo}), when called with inputs $\y_i,\lambda_i$). 
\end{lemma}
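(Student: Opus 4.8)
The plan is to follow the same scheme as the proof of Lemma~\ref{lem:wpoVecImp}, with von Neumann's trace inequality as the bridge between the Frobenius geometry on $\reals^{m\times n}$ and the Euclidean geometry of singular-value vectors. First, since $\w$ is chosen as a minimizer of $\psi$ over $\{\y_i,\z'\}$, we automatically have $\psi(\w)\leq\psi(\y_i)$, so it suffices to prove $\psi(\z')\leq\psi(\x^*)$. Both $\x^*$ and $\z'$ lie in $\mM=\dom(\R)$: for $\x^*$ this is by assumption, and for $\z'=\U_{\z}\diag(\sigma_{\z}')\V_{\z}^{\top}$ it holds because $\U_{\z},\V_{\z}$ have orthonormal columns and $\sigma_{\z}'\in\mK$ (as $\Pi_{\mK_T}[\cdot]\in\mK_T$ implies $\I_T\Pi_{\mK_T}[\cdot]\in\mK$). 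Hence $\R(\z')=\R(\x^*)=0$, and completing the square in \eqref{eq:wpo:func} around the gradient-step point $\z=\y_i-(\lambda_i\tilde{\beta})^{-1}\nabla Q_t(\y_i)$ reduces the desired inequality to $\Vert{\z'-\z}\Vert_F^2\leq\Vert{\x^*-\z}\Vert_F^2$. So it is enough to show that $\z'$ is a Frobenius-norm projection of $\z$ onto $\mM_s:=\mM\cap\{\X:\rank(\X)\leq s\}$, and note that $\x^*\in\mM_s$ since $\rank(\x^*)\leq s$.

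Next I would invoke von Neumann's trace inequality: for any $\P\in\reals^{m\times n}$, writing $\sigma(\cdot)$ for the vector of singular values sorted in non-increasing order, $\langle{\P,\z}\rangle\leq\langle{\sigma(\P),\sigma(\z)}\rangle$, and since $\Vert{\P}\Vert_F^2=\Vert{\sigma(\P)}\Vert^2$ this gives
\begin{align*}
\Vert{\P-\z}\Vert_F^2 \;=\; \Vert{\sigma(\P)}\Vert^2 + \Vert{\sigma(\z)}\Vert^2 - 2\langle{\P,\z}\rangle \;\geq\; \Vert{\sigma(\P)-\sigma(\z)}\Vert^2 .
\end{align*}
If moreover $\P\in\mM_s$, then its singular values form a permutation of a vector of $\mK$ with at most $s$ nonzero entries, so (using that $\mK$ is nonnegative and closed to permutations) $\sigma(\P)\in\mK_s:=\mK\cap\{\x:\nnz(\x)\leq s\}$. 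On the other hand, since $\z'$ and $\z$ share the singular vectors $\U_{\z},\V_{\z}$, a direct trace computation using $\U_{\z}^{\top}\U_{\z}=\V_{\z}^{\top}\V_{\z}=\I$ and the cyclic property of the trace gives $\langle{\z',\z}\rangle=\langle{\sigma_{\z}',\sigma_{\z}}\rangle$ and $\Vert{\z'}\Vert_F^2=\Vert{\sigma_{\z}'}\Vert^2$, hence $\Vert{\z'-\z}\Vert_F^2=\Vert{\sigma_{\z}'-\sigma_{\z}}\Vert^2$. Finally, $\sigma_{\z}$ is sorted (SVD convention), so it equals $\sigma(\z)$, and by the nonnegative case of the vector result --- Lemma~\ref{lem:wpoVecImp}, i.e., Algorithm~2 of \cite{beck2016minimization} applied to the nonnegative, permutation-invariant set $\mK$ --- the vector $\sigma_{\z}'$ is a Euclidean projection of $\sigma_{\z}$ onto $\mK_s$, i.e., $\Vert{\sigma_{\z}'-\sigma_{\z}}\Vert\leq\Vert{\q-\sigma_{\z}}\Vert$ for all $\q\in\mK_s$.

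Combining these, for every $\P\in\mM_s$ one gets $\Vert{\z'-\z}\Vert_F^2=\Vert{\sigma_{\z}'-\sigma_{\z}}\Vert^2\leq\Vert{\sigma(\P)-\sigma(\z)}\Vert^2\leq\Vert{\P-\z}\Vert_F^2$, using $\sigma(\P)\in\mK_s$ in the first inequality and the von Neumann bound in the second; so $\z'$ minimizes $\Vert{\cdot-\z}\Vert_F$ over $\mM_s$. Taking $\P=\x^*\in\mM_s$ yields $\Vert{\z'-\z}\Vert_F^2\leq\Vert{\x^*-\z}\Vert_F^2$, hence $\psi(\z')\leq\psi(\x^*)$, which is \eqref{eq:wpo}. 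The only non-routine step is the use of von Neumann's trace inequality to reduce the matrix projection onto the spectral set $\mM_s$ to the already-solved vector projection onto $\mK_s$, together with the (easy but essential) observation that $\z'$ attains that inequality precisely because it is assembled from the singular vectors of $\z$ itself; the rest is bookkeeping to ensure the rank/support constraints line up so that $\x^*,\z'\in\mM_s$ and $\sigma(\P)\in\mK_s$.
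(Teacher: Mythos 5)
Your argument is correct and follows essentially the same route as the paper's proof: reduce \eqref{eq:wpo} to $\Vert{\z'-\z}\Vert_F^2\leq\Vert{\x^*-\z}\Vert_F^2$, then combine the vector projection result of \cite{beck2016minimization} with von Neumann's trace inequality to pass between the Frobenius geometry of matrices and the Euclidean geometry of singular-value vectors. The only (harmless) difference is organizational: you establish the slightly stronger fact that $\z'$ is a Frobenius projection of $\z$ onto the whole rank-constrained spectral set and then specialize to $\x^*$, whereas the paper compares $\z'$ first to the single re-aligned competitor $\U_{\z}\diag(\sigma^*)\V_{\z}^{\top}$ and then to $\x^*$ via von Neumann.
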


\begin{proof}
As in the proof of Lemma \ref{lem:wpoVecImp}, it suffices to prove that $\Vert{\z'-\z}\Vert_F^2 \leq \Vert{\x^*-\z}\Vert_F^2$, where $\Vert{\cdot}\Vert_F$ denotes the Frobenius (Euclidean) norm for matrices. Similarly to the proof of Lemma \ref{lem:wpoVecImp}, using the results of \cite{beck2016minimization}, we have that
$\Vert{\sigma_{\z}' - \sigma_{\z}}\Vert \leq \Vert{\sigma^*-\sigma_{\z}}\Vert$.
This implies that
\begin{align*}
\Vert{\z' - \z}\Vert_F = \Vert{\U_{\z}\diag(\sigma_{\z}')\V_{\z}^{\top} - \z}\Vert_F \leq \Vert{\U_{\z}\diag(\sigma^*)\V_{\z}^{\top} - \z}\Vert_F.
\end{align*}

It thus remains to be shown that
\begin{align*}
\Vert{\U_{\z}\diag(\sigma^*)\V_{\z}^{\top} - \z}\Vert_F \leq \Vert{\U^*\diag(\sigma^*)\V^{*\top} - \z}\Vert_F.
\end{align*}
Since $\Vert{\U_{\z}\diag(\sigma^*)\V_{\z}^{\top}}\Vert_F = \Vert{\U^*\diag(\sigma^*)\V^{*\top}}\Vert_F$, it suffices to show that 
\begin{align*}
\langle{\U_{\z}\diag(\sigma^*)\V_{\z}^{\top}, \z}\rangle \geq \langle{\U^*\diag(\sigma^*)\V^{*\top}, \z}\rangle. 
\end{align*}
From von-Neumann's inequality for the singular values we have that, 
\begin{align*}
\langle{\U^*\diag(\sigma^*)\V^{*\top}, \z}\rangle &\leq \sum_{i=1}^{\min\{m,n\}}\sigma^*(i)\sigma_{\z}(i) = \langle{\U_{\z}\diag(\sigma^*)\V_{\z}^{\top}, \z}\rangle,
\end{align*}
which completes the proof.
\end{proof}


\begin{table*}\renewcommand{\arraystretch}{1.3}
{\footnotesize
\begin{center}
\begin{tabular}{ | p{16em} | p{8em} | p{5em}|} 
  \hline
  Algorithm &  SVD rank & \#{}iterations   \\\hline
  Algorithm  \ref{alg:wpo:1} & $s$   &$\tilde{\beta}/\alpha$  \\ \hline
  Accelerated Gradient methods \cite{nesterov2013gradient, beck2009fast} & $\min\{m,n\}$  & $\sqrt{\tilde{\beta}/\alpha}$  \\ \hline
  Frank-Wolfe \cite{jaggi2010simple} & $1$  &  $\tilde{\beta}\tau^2/\epsilon$  \\ \hline
\end{tabular}
\caption{Comparison for solving subproblem of minimizing $\phi_t$  when $\R$ is an indicator function for a nuclear norm ball in $\reals^{m\times n}$ of radius $\tau$  and $\rank(\x^*) \leq s << \min\{m,n\}$. Universal constants and logarithmic factors are suppressed. $\epsilon$ denotes the target accuracy.}\label{table:fowpo:2}
\end{center}}
\vskip -0.2in
\end{table*}\renewcommand{\arraystretch}{1}  

\begin{corollary}\label{cor:matix}
Suppose Assumption \ref{ass:matrixset} holds and that $\rank(\x^*) \leq s$ for some $s << \min\{m,n\}$. Then, Algorithm \ref{alg:wpo:1} admits an implementation based on Lemma \ref{lem:wpoMatImp} such that given the initial information: $\x_t, \nabla{}f(\x_t), \nabla^2f(\x_t)\x_t$, each iteration requires:
\begin{enumerate}
\item
Single rank-$s$ singular value decomposition (i.e., computing only the top $s$ components in the SVD) of a $m\times n$ real matrix. 
\item
Single projection computation of  a $s$-dimensional vector onto a $s$-dimensional restriction of the set $\mK$ (e.g., if $\mK$ is the restriction of an $\ell_p$ ball  in $\reals^{\min\{m,n\}}$ to the non-negative orthant, then the projection is onto the restriction of an $\ell_p$ ball  to the non-negative orthant in $\reals^s$).
\item
Single computation of the product of $\nabla^2f(\x_t)$ with a rank-$s$ $m\times n$ matrix (given by its SVD).
\item
Additional $O(smn)$ time. 
\end{enumerate}
\end{corollary}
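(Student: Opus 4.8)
The plan is to mimic the amortization argument in the proof of Corollary~\ref{cor:wpo1:vec}, with entrywise sparsity replaced by low rank and the coordinate-sorting step replaced by a truncated SVD. The key structural fact is that the gradient of the model $Q_t$ depends on the iterate $\y_i$ through a \emph{single} application of the linear operator $\nabla^2 f(\x_t)$. Indeed, writing $\c := \nabla{}f(\x_t) - \nabla^2 f(\x_t)\x_t$, which is formed once in $O(mn)$ time from the supplied data, one has
\begin{align*}
\nabla Q_t(\y_i) = \c + \nabla^2 f(\x_t)\y_i + \kappa_i\cdot(\y_i-\x_t),
\end{align*}
where $\kappa_i = 0$ when $\phi_t = \phi_{\x_t,1}$ and $\kappa_i = \tfrac{\beta_2}{2}\Vert{\y_i-\x_t}\Vert_F$ when $\phi_t = \phir_{\x_t,1}$ (Assumption~\ref{ass:matrixset} forces $\dom(\R)$ to be bounded, so the latter is the relevant case; the former is only simpler). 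So the idea is to carry, alongside $\y_i$, the $m\times n$ matrix $\G_i := \nabla^2 f(\x_t)\y_i$, initialized by the given $\G_1 = \nabla^2 f(\x_t)\x_t$; then $\nabla Q_t(\y_i)$, and hence $\z := \y_i - \tfrac{1}{\lambda_i\tilde{\beta}}\nabla Q_t(\y_i)$, is assembled in $O(mn)$ elementary operations, since the only other ingredient, $\kappa_i$, costs $O(mn)$ too.

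Next I would describe one iteration of Algorithm~\ref{alg:wpo:1} and charge its cost. Given $\y_i$ and $\G_i$: (i) form $\nabla Q_t(\y_i)$ and $\z$ in $O(mn)$ time; (ii) compute a rank-$s$ (truncated) SVD of $\z$ --- item~1 --- which is all that is needed, because in Lemma~\ref{lem:wpoMatImp} the set $T$ consists of the $s$ largest singular values of $\z$ and the oracle output $\z'$ retains the singular vectors $\U_{\z},\V_{\z}$ while only perturbing those $s$ singular values; (iii) project the corresponding $s$-dimensional singular-value vector onto the restriction $\mK_T$ --- item~2 --- obtaining $\z' = \U_{\z}\diag(\sigma_{\z}')\V_{\z}^{\top}$, which is kept in factored form; (iv) evaluate $\psi_{\y_i,\lambda_i}(\y_i) = \R(\y_i) = 0$ and $\psi_{\y_i,\lambda_i}(\z')$ and let $\w_i$ be the point of $\{\y_i,\z'\}$ with the smaller value, which is a legal WPO output by Lemma~\ref{lem:wpoMatImp}; here the inner products $\langle{\z',\nabla Q_t(\y_i)}\rangle$ and $\langle{\z',\y_i}\rangle$ and the required Frobenius norms cost $O(smn)$ because $\z'$ is a sum of $s$ rank-one matrices while $\y_i$ and $\nabla Q_t(\y_i)$ are stored densely, and $\R$ vanishes on both candidates since $\z'\in\mM$ by construction and $\y_i\in\mM$ inductively; (v) update $\y_{i+1} = (1-\lambda_i)\y_i + \lambda_i\w_i$ (dense, $O(smn)$ time, dominated by expanding $\z'$) and $\G_{i+1} = (1-\lambda_i)\G_i + \lambda_i\nabla^2 f(\x_t)\w_i$, where the only genuinely new Hessian application is $\nabla^2 f(\x_t)\w_i$: it equals $\G_i$ if $\w_i=\y_i$, and otherwise is the product of $\nabla^2 f(\x_t)$ with the rank-$s$ matrix $\z'$ given by its SVD --- item~3 --- followed by an $O(mn)$ combination. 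Finally $\y_{i+1}\in\mM$ by convexity of $\mM$, which closes the induction.

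Summing these per-iteration costs gives precisely items 1--4: one rank-$s$ SVD, one projection onto an $s$-dimensional restriction of $\mK$, one product of $\nabla^2 f(\x_t)$ with a rank-$s$ matrix, and $O(smn)$ further arithmetic (assembling $\z$, evaluating the two candidate values of $\psi$, expanding $\z'$, the convex combinations of $\y_i$ and of $\G_i$, and, in the regularized case, the $O(mn)$ contribution of $\kappa_i(\y_i-\x_t)$). I do not anticipate any real obstacle: this is an accounting argument built on Lemma~\ref{lem:wpoMatImp}, exactly parallel to Corollary~\ref{cor:wpo1:vec}, with the truncated SVD playing the role of ``find the $s$ largest coordinates'' and von~Neumann's trace inequality (already used inside Lemma~\ref{lem:wpoMatImp}) playing the role of coordinate sorting. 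The one point deserving care is to verify that the \emph{truncated} rank-$s$ SVD of $\z$, rather than a full SVD, suffices; this is immediate once one observes, as in step (ii), that Lemma~\ref{lem:wpoMatImp} only ever reads the top-$s$ singular subspace of $\z$, and that the comparison between $\psi_{\y_i,\lambda_i}(\y_i)$ and $\psi_{\y_i,\lambda_i}(\z')$ can be carried out entirely in the factored representation of $\z'$.
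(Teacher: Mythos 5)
Your proposal is correct and follows essentially the same route as the paper's (much terser) proof: both are cost-accounting arguments on top of Lemma~\ref{lem:wpoMatImp}, charging each iteration one truncated rank-$s$ SVD of $\z$, one projection onto the $s$-dimensional restriction $\mK_T$, one Hessian product with the rank-$s$ update direction, and $O(smn)$ auxiliary arithmetic. Your extra bookkeeping (maintaining $\nabla^2 f(\x_t)\y_i$, the explicit gradient of the cubic term, and the observation that only the top-$s$ singular subspace of $\z$ is ever read) simply makes explicit what the paper summarizes as ``a straightforward inspection.''
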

\begin{proof}
On each iteration  $i$ of Algorithm \ref{alg:wpo:1}, given the gradient $\nabla{}Q_t(\y_i)$, computing the matrix $\z'$ (as defined in Lemma  \ref{lem:wpoMatImp}) requires finding the $s$ largest components in the SVD of $\z$ and to to project a $s$-dimensional vector onto the corresponding $s$-dimensional restriction of $\mK$. Since $\z'$ has rank at most $s$, a straightforward inspection reveals that the time to compute the next gradient direction $\nabla{}Q_t(\y_{i+1})$ is dominated by the time to multiply the Hessian $\nabla^2f(\x_t)$ with a rank-$s$ matrix given by its SVD.
\end{proof}

\paragraph{Example: low-rank optimization with a nuclear norm constraint.} A classical example in which $\x^*$ is expected to be a low-rank matrix is when $\R(\cdot)$ is the indicator function for a nuclear norm ball in $\reals^{m\times n}$: $\{\x\in\reals^{m\times n} ~|~ \sum_{i=1}^{\min\{m,n\}}\sigma_i(\x) \leq \tau\}$ for some $\tau > 0$, where $\sigma_i(\cdot)$ denotes the $i$th singular value. In this setting, a frequent computational bottleneck in solving the subproblems within Newton methods are high-rank SVD computations required to project onto the nuclear norm ball. Table \ref{table:fowpo:2} compares Algorithm \ref{alg:wpo:1} to  state-of-the-art methods such as accelerated gradient methods \cite{nesterov2013gradient, beck2009fast}  (which require in worst-case full rank SVD computations) and the Frank-Wolfe method \cite{jaggi2010simple} (which requires only rank-one SVDs, but converges only with a sublinear rate in this setting). For instance, when $m=n$, and considering that the time to compute a rank-$s$ SVD is often $\approx O(sn^2)$ in practice while the time to compute a full rank SVD is $O(n^3)$ in standard implementations, if $\tilde{\beta}/\alpha << (n/s)^2$ then Algorithm \ref{alg:wpo:1} may indeed be considerably faster.



\section{Implementing WPNOs via the Decomposition-Invariant Conditional Gradient Method}\label{sec:dicgwpo}
In this section we will be interested in the case that $\R$ is the indicator function for a convex and compact polytope in $\reals^n$. For the polytopes that will be relevant here, Euclidean projection is most often computationally difficult in high dimension, while there exists a much more efficient procedure for linear optimization over the polytope, and hence the interest in implementations of WPNOs (Definition \ref{def:2wpo}) based on  state-of-the-art conditional gradient methods for polyhedral sets \cite{garber2016linear, bashiri2017decomposition}. 
 In case $\x^*$ lies on a low-dimensional face of the polytope, these conditional gradient methods will enable to obtain linear convergence rates that scale only with the dimension of the optimal face (as opposed to the ambient dimension $n$ in standard linearly-converging conditional gradient methods \cite{garber2016linearly, lacoste2015global}).

Throughout this section we assume Assumption \ref{ass:qg} holds true, as well as the following assumption:
\begin{assumption}\label{ass:polytope}
$\R(\cdot)$ is an indicator function for a convex and compact polytope $\mP\subset\reals^n$ given in the form $\mP = \{\x\in\reals^n~|~\x\geq 0, ~\A\x=\b\}$, and the set of vertices of $\mP$, denoted by $\mV(\mP)$, are binary-valued, i.e., $\mV(\mP)\subset\{0,1\}^n$, and $|\mV(\mP)| \geq 2$.
\end{assumption}
Notable polytopes satisfying Assumption \ref{ass:polytope} include the \textit{flow polytope} of a combinatorial graph, the \textit{perfect matchings polytope} of a combinatorial bipartite graph,  the \textit{marginal polytope} associated with certain graphical models, and of course also the unit simplex \cite{garber2016linearly}.

We denote $n^* = \dim\mF^*+1$, where $\mF^*$ is the lowest dimensional face of $\mP$ containing $\x^*$ and $\dim\mF^*$ denotes its dimension. Note in particular that Carathéodory's theorem implies that $\x^*$ can be written as a convex combination of at most $n^*$ points from $\mV(\mP)$.

Throughout this section we fix some iteration $t$ of the iterations described in Eq. \eqref{eq:updatestep}, and accordingly we shall use the short notation $\phi_t = \phir_{\x_t,1}$,  $Q_t = Q_{\x_t}^r$ and  define the first-order smoothness parameter $\tilde{\beta}$ as in Eq. \eqref{eq:tbeta}.
\begin{table*}\renewcommand{\arraystretch}{1.3}
{\footnotesize
\begin{center}
\begin{tabular}{ | p{18em} | c|} 
  \hline
  Algorithm  & \#{}iterations scales with   \\\hline
  cond. grad. for general polytopes \cite{garber2016linearly, lacoste2015global} & $n$  \\ \hline
    DICG - standard analysis \cite{garber2016linear, bashiri2017decomposition}  & $n$  \\ \hline
 DICG  - our analysis & $n^*$    \\ \hline
\end{tabular}\caption{Comparison of conditional gradient methods for solving subproblem of minimizing $\phi_t$ when $\R$ is an indicator function for a polytope satisfying Assumption \ref{ass:polytope} and $n^*=\dim\mF^*+1 << n$.}\label{table:dicg}.
\end{center}}
\vskip -0.2in
\end{table*}\renewcommand{\arraystretch}{1}  
\begin{algorithm}
\begin{algorithmic}
\caption{WRPNO via the DICG Algorithm (see also \cite{garber2016linear})}\label{alg:PDICG}
\STATE Input: sequence of step-sizes $(\gamma_i)_{i\geq 1}\subset[0,1]$
\STATE $\y_1 \gets \arg\min_{\w\in\mV(\mP)}\langle{\w, \nabla{}Q_t(\x_t)}\rangle$
\FOR{$i=1,2,\dots$}
\STATE $\w_i^+ \gets\arg\min_{\w\in\mV(\mP)}\langle{\w,\nabla\phi_t(\y_i)}\rangle$ \COMMENT{compute Frank-Wolfe }
\STATE Define the vector $\widetilde{\nabla{}Q_t}(\y_i)\in\reals^n$ as follows:
\begin{align*}
\left[{\widetilde{\nabla{}Q_t}(\y_i)}\right]_j =  \left\{ \begin{array}{ll}
        \left[{\nabla{}Q_t(\y_i)}\right]_j  & \mbox{if $\y_i(j) > 0$}\\
        -\infty & \mbox{if $\y_i(j) = 0$}\end{array} \right.
\end{align*}
\STATE $\w_i^- \gets\arg\max_{\w\in\mV(\mP)}\langle{\w,\widetilde{\nabla{}Q_t}(\y_i)}\rangle$ \COMMENT{compute away direction }
\STATE $q_i \gets$ smallest  non-negative integer such that $2^{-q_i}  \leq \gamma_i$
\STATE $\lambda_i \gets 2^{-q_i}$
\IF{$\phi_t(\y_i + \lambda_i(\w_i^+-\w_i^{-})) < \phi_t(\y_i)$}
\STATE $\y_{i+1} \gets\y_i + \lambda_i(\w_i^+-\w_i^{-})$
\ELSE
\STATE $\y_{i+1} \gets \y_i$
\ENDIF
\ENDFOR
\end{algorithmic}
\end{algorithm}

\begin{theorem}\label{thm:dicg}
Suppose Assumptions \ref{ass:qg} and \ref{ass:polytope} hold. 
Set $\gamma_i = c_1^{1/2}(1-c_1)^{\frac{i-1}{2}}$ in Algorithm \ref{alg:PDICG}, where $c_1=\frac{\alpha}{32n^*\tilde{\beta}D^2}$. The sequence of iterates $(\y_i)_{i\geq 1}$ is feasible (w.r.t. $\mP$), and
\begin{align}\label{eq:thm:DICG:res}
\phi_t(\y_i) - \phi_t(\x^*) \leq \max\left\{\frac{\tilde{\beta}D^2}{2}\left(1{-\frac{\alpha}{32n^*\tilde{\beta}D^2}}\right)^{i-1}, ~ \frac{\beta_2}{3}\Vert{\x_t-\x^*}\Vert^3\right\}.
\end{align}
\end{theorem}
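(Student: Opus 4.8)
The plan is to track $h_i := \phi_t(\y_i) - \phi_t(\x^*)$ and $\delta_* := \frac{\beta_2}{3}\Vert{\x_t-\x^*}\Vert^3$, and to prove $h_i \le \max\{B_i,\delta_*\}$ for all $i$ by induction, where $B_i := \frac{\tilde\beta D^2}{2}(1-c_1)^{i-1}$. The first observation is that the update only moves to $\y_i + \lambda_i(\w_i^+-\w_i^-)$ when this strictly decreases $\phi_t$, so $(h_i)_i$ is non-increasing; hence once $h_i \le \delta_*$ it stays there, and the inductive step only needs to be carried out when $h_i > \delta_*$, which (with the inductive hypothesis) forces $h_i \le B_i$. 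For the base case, $\y_1 = \arg\min_{\w\in\mV(\mP)}\langle{\w,\nabla Q_t(\x_t)}\rangle$ is a minimizer of this linear functional over all of $\mP$ and $Q_t(\x_t)=0$, so $\tilde\beta$-smoothness of $Q_t$ at $\x_t$, Frank--Wolfe optimality of $\y_1$, and convexity of $Q_t$ give $h_1 = Q_t(\y_1)-Q_t(\x^*) \le \tfrac{\tilde\beta}{2}\Vert{\y_1-\x_t}\Vert^2 \le B_1$.

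Next I would record feasibility and a one-step descent inequality. The point $\y_1\in\mV(\mP)$ is feasible; the affine constraint is preserved along the updates since $\A(\w_i^+-\w_i^-)=\b-\b=0$; and for non-negativity, the $-\infty$ masking in $\widetilde{\nabla Q_t}(\y_i)$ forces $\supp(\w_i^-)\subseteq\supp(\y_i)$, so only coordinates where $\y_i$ is already positive can decrease. By induction on $i$ --- using that $(\gamma_i)_i$ is non-increasing, hence $(q_i)_i$ is non-decreasing, and that $\y_1\in\{0,1\}^n$ --- every positive coordinate of $\y_i$ is an integer multiple of $2^{-q_{i-1}}\ge 2^{-q_i}=\lambda_i$, so $\y_i+\lambda_i(\w_i^+-\w_i^-)\ge 0$; thus every candidate point lies in $\mP$ and no ``drop step'' ever occurs. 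Writing $d_i=\w_i^+-\w_i^-$ and $G_i:=\langle{\w_i^--\w_i^+,\nabla Q_t(\y_i)}\rangle$ (note $\nabla\phi_t(\y_i)=\nabla Q_t(\y_i)$ since $\R$ is an indicator), $\tilde\beta$-smoothness of $Q_t$ and the ``keep the better of the two points'' rule give
\[
h_{i+1}\;\le\;\phi_t(\y_i+\lambda_i d_i)-\phi_t(\x^*)\;\le\;h_i-\lambda_i G_i+\tfrac{\lambda_i^2\tilde\beta}{2}\Vert{d_i}\Vert^2\;\le\;h_i-\tfrac{\gamma_i}{2}G_i+\tfrac{\gamma_i^2\tilde\beta D^2}{2},
\]
using $\lambda_i\in(\gamma_i/2,\gamma_i]$, $\Vert{d_i}\Vert\le D$, and $G_i\ge 0$.

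The crux is a lower bound on $G_i$. Since $\w_i^-$ maximizes $\langle{\cdot,\nabla Q_t(\y_i)}\rangle$ over the vertices supported inside $\supp(\y_i)$ and $\y_i$ is a convex combination of such vertices, $\langle{\w_i^-,\nabla Q_t(\y_i)}\rangle\ge\langle{\y_i,\nabla Q_t(\y_i)}\rangle$, so $G_i$ dominates the Frank--Wolfe gap at $\y_i$, which by convexity of $Q_t$ is at least $\langle{\y_i-\x^*,\nabla Q_t(\y_i)}\rangle\ge h_i$. That estimate alone cannot yield linear convergence, since the step-sizes $\gamma_i$ decay geometrically; the key additional ingredient is a pyramidal-width-type refinement: exploiting that $\x^*$ lies on the face $\mF^*$ and hence, by Carath\'eodory, is a convex combination of at most $n^*=\dim\mF^*+1$ vertices of $\mP$, together with the $0/1$ structure of $\mV(\mP)$ and the support-restricted away oracle, one should obtain $G_i\ge\frac{c}{\sqrt{n^*}}\cdot\frac{\langle{\y_i-\x^*,\nabla Q_t(\y_i)}\rangle}{\Vert{\y_i-\x^*}\Vert}$ for an absolute constant $c$. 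This is the step that replaces the ambient dimension appearing in the standard DICG analysis \cite{garber2016linear,bashiri2017decomposition} by the optimal-face quantity $n^*$, and it is where I expect the main difficulty to lie: one must decompose the direction $\x^*-\y_i$ into ``pairwise swaps'' involving only the at most $n^*$ vertices of $\x^*$ and track the resulting geometric constant carefully.

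Finally I would close the recursion. By Lemma \ref{lem:QGprox} and the quadratic growth of $F$ (Assumption \ref{ass:qg}), $h_i+\delta_*\ge F(\y_i)-F^*\ge\frac{\alpha}{2}\Vert{\y_i-\x^*}\Vert^2$, so in the regime $h_i\ge\delta_*$ we get $\Vert{\y_i-\x^*}\Vert\le 2\sqrt{h_i/\alpha}$ and hence $G_i\ge\frac{c}{2\sqrt{n^*}}\sqrt{\alpha h_i}$. Plugging this and $\gamma_i^2=c_1(1-c_1)^{i-1}=2c_1 B_i/(\tilde\beta D^2)$ into the descent inequality, the inductive step splits: if $h_i\le(1-c_1)B_i=B_{i+1}$ then monotonicity already gives $h_{i+1}\le B_{i+1}$; otherwise $h_i\in(B_{i+1},B_i]$, and inserting $h_i\ge(1-c_1)B_i$ into the $\sqrt{h_i}$ factor makes the descent term $\tfrac{\gamma_i}{2}G_i$ exceed $\tfrac{\gamma_i^2\tilde\beta D^2}{2}$ by at least $h_i-B_{i+1}$ precisely when $c_1=\frac{\alpha}{32 n^*\tilde\beta D^2}$ (this calibrates the constant $32$), so again $h_{i+1}\le B_{i+1}$. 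Combined with the opening remark about $\delta_*$ and monotonicity, this gives $h_i\le\max\{B_i,\delta_*\}$ for all $i$, which is \eqref{eq:thm:DICG:res}.
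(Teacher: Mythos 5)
Your overall scaffolding matches the paper's: monotonicity of $(\phi_t(\y_i))_i$ reduces everything to the regime $\phi_t(\y_i)-\phi_t(\x^*) \geq \frac{\beta_2}{3}\Vert{\x_t-\x^*}\Vert^3$, the base case follows from $\tilde{\beta}$-smoothness plus the Frank--Wolfe optimality of $\y_1$, and the one-step inequality $h_{i+1}\leq h_i-\frac{\gamma_i}{2}G_i+\frac{\gamma_i^2\tilde{\beta}D^2}{2}$ with $\lambda_i\in[\gamma_i/2,\gamma_i]$ is exactly Eq. \eqref{eq:thm:DICG:1}. But the heart of the theorem --- the lower bound $G_i = \langle{\w_i^--\w_i^+,\nabla{}Q_t(\y_i)}\rangle \geq \sqrt{\frac{\alpha}{4n^*}\,h_i}$, i.e.\ the paper's Lemma \ref{lem:dicg:ie} --- is precisely what you do not prove: you write that ``one should obtain'' a bound of the form $G_i\gtrsim \frac{1}{\sqrt{n^*}}\langle{\y_i-\x^*,\nabla{}Q_t(\y_i)}\rangle/\Vert{\y_i-\x^*}\Vert$ and acknowledge this is where the difficulty lies. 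This is the step that replaces the ambient dimension $n$ of the standard DICG analysis by $n^*$, so without it the stated rate is unsubstantiated. In the paper it is obtained by invoking the decomposition result (Lemma 5 of \cite{bashiri2017decomposition}): writing $\x^*=\sum_{j=1}^{n^*}\delta_j\u_j$ over at most $n^*$ vertices, one decomposes $\y_i=\sum_j(\delta_j-\Delta_j)\u_j+\bigl(\sum_j\Delta_j\bigr)\z$ with $\z\in\mP$, $\supp(\z)\subseteq\supp(\y_i)$, $0\leq\Delta_j\leq\delta_j$ and, crucially, $\sum_j\Delta_j\leq\sqrt{n^*}\Vert{\y_i-\x^*}\Vert$; then $\langle{\u_j-\w_i^+,\nabla{}Q_t(\y_i)}\rangle\geq 0$, $\langle{\z-\w_i^-,\nabla{}Q_t(\y_i)}\rangle\leq 0$ (this is where the support condition on $\z$ and the $-\infty$ masking enter), convexity of $Q_t$, and the quadratic-growth bound $\Vert{\y_i-\x^*}\Vert\leq\sqrt{\frac{2}{\alpha}(F(\y_i)-F^*)}$ combined with Lemma \ref{lem:QGprox} give the claim. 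Your sketch of ``pairwise swaps involving only the vertices of $\x^*$'' gestures at this, but the quantitative $\sqrt{n^*}$ bound on $\sum_j\Delta_j$ (a nontrivial fact about $0/1$ polytopes) is the actual content and is missing.

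A secondary, fixable issue: your closing calibration does not verify as stated. Plugging $h_i\geq(1-c_1)B_i$ into the $\sqrt{h_i}$ factor gives descent at least $2c_1B_i\sqrt{1-c_1}$ versus penalty $c_1B_i$, and the requirement $h_{i+1}\leq B_{i+1}$ then reduces to $\sqrt{1-c_1}\geq 1$, which fails. The induction closes if instead you use the \emph{upper} bound $h_i\leq B_i$ (the induction hypothesis) to write $\sqrt{B_ih_i}\geq h_i$, i.e.\ $\sqrt{h_i}\geq h_i/\sqrt{B_i}$, which turns the descent term into $2c_1h_i$ and yields $h_{i+1}\leq h_i(1-2c_1)+c_1B_i\leq B_{i+1}$; this is exactly how the paper closes its induction with the constant $32$. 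Your feasibility argument (dyadic step-sizes, $\supp(\w_i^-)\subseteq\supp(\y_i)$, no drop steps) is a correct reproduction of what the paper imports as Lemma 1 of \cite{garber2016linear}, so that part is fine.
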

Thus, by running Algorithm \ref{alg:PDICG} for sufficiently-many iterations ($\tilde{O}(n^*\tilde{\beta}D^2/\alpha)$, where $\tilde{O}(\cdot)$ suppresses logarithmic factors), we can implement a WRPNO with parameter $C_{\mA} = O(\beta_2)$.

Before we prove the theorem we shall need the following lemma.
\begin{lemma}\label{lem:dicg:ie}
Fix an iteration  $i$ of Algorithm \ref{alg:PDICG} such that $\phi_t(\y_i) - \phi_t(\x^*) \geq \frac{\beta_2}{3}\Vert{\x_t-\x^*}\Vert^3$. Then,
\begin{align*}
\langle{\w_i^- - \w_i^+, \nabla{}Q_t(\y_i)}\rangle  \geq  \sqrt{\frac{\alpha}{4n^*}\left({\phi_t(\y_i) - \phi_t(\x^*)}\right)}.
\end{align*}
\end{lemma}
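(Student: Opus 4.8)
The plan is to lower bound the ``Frank--Wolfe gap plus away gap'' quantity $\langle \w_i^- - \w_i^+, \nabla Q_t(\y_i)\rangle$ by the duality gap $\phi_t(\y_i)-\phi_t(\x^*)$, and then sharpen this to a square-root bound using quadratic growth together with the fact that $\x^*$ lies on a face of dimension $n^*-1$. First I would recall the standard decomposition-invariant estimate: since the vertices of $\mP$ are binary and $\x^*$ has a convex representation using at most $n^*$ vertices (Carath\'eodory on $\mF^*$), one can write $\x^* - \y_i$ as a combination of the away direction and Frank--Wolfe direction in a way that gives
\begin{align*}
\langle \w_i^- - \w_i^+, \nabla Q_t(\y_i)\rangle \;\geq\; \frac{1}{n^*}\langle \y_i - \x^*, \nabla Q_t(\y_i)\rangle.
\end{align*}
This is exactly the key geometric lemma from \cite{garber2016linear, bashiri2017decomposition}, except that the ambient dimension $n$ is replaced by $n^*$; the point is that the away vertex $\w_i^-$ maximizes over coordinates in the support of $\y_i$, and $\x^*$ restricted to that support can be bounded coordinate-wise, so only $n^*$ terms are ``active.'' I would state this as the first step, citing the decomposition-invariance property.

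The second step is to convert the linear-lower-bound $\langle \y_i - \x^*, \nabla Q_t(\y_i)\rangle$ into something controlled by $\phi_t(\y_i)-\phi_t(\x^*)$. Here I would use convexity of $Q_t$ (note $Q_t = Q^r_{\x_t}$ is convex, being a sum of a convex quadratic and a convex cubic term) together with the subgradient of $\R$: since $\R$ is the indicator of $\mP$ and both $\y_i,\x^*\in\mP$, we have $\R(\y_i)=\R(\x^*)=0$, so $\langle \y_i-\x^*,\nabla Q_t(\y_i)\rangle \geq Q_t(\y_i)-Q_t(\x^*) = \phi_t(\y_i)-\phi_t(\x^*)$. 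Combining with step one gives
\begin{align*}
\langle \w_i^- - \w_i^+, \nabla Q_t(\y_i)\rangle \;\geq\; \frac{1}{n^*}\left(\phi_t(\y_i)-\phi_t(\x^*)\right).
\end{align*}

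The third step produces the square-root. I would invoke Lemma \ref{lem:QGprox}: since $\phi_t = \phir_{\x_t,1}$, that lemma gives $\phi_t(\y_i)-\phi_t(\x^*) \geq F(\y_i)-F^* - \frac{\beta_2}{3}\Vert\x_t-\x^*\Vert^3$. Under the hypothesis $\phi_t(\y_i)-\phi_t(\x^*) \geq \frac{\beta_2}{3}\Vert\x_t-\x^*\Vert^3$, this yields $F(\y_i)-F^* \leq 2\left(\phi_t(\y_i)-\phi_t(\x^*)\right)$, hence by the quadratic growth of $F$ (Assumption \ref{ass:qg}), $\Vert\y_i-\x^*\Vert^2 \leq \frac{2}{\alpha}(F(\y_i)-F^*) \leq \frac{4}{\alpha}\left(\phi_t(\y_i)-\phi_t(\x^*)\right)$. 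Now I would go back and extract a \emph{second}, complementary lower bound on the linear term that is itself linear in $\Vert\y_i-\x^*\Vert$ rather than in the gap: from step one plus Cauchy--Schwarz-type reasoning (or re-using the decomposition-invariant identity which actually bounds the away-FW inner product below by $\frac{1}{n^*}\langle\y_i-\x^*,\nabla Q_t(\y_i)\rangle$ and also, via convexity-from-the-other-side, relates it to $\Vert\y_i-\x^*\Vert$), one deduces $\langle \w_i^- - \w_i^+, \nabla Q_t(\y_i)\rangle \geq \frac{1}{n^*}\cdot\frac{\phi_t(\y_i)-\phi_t(\x^*)}{\Vert\y_i-\x^*\Vert}\cdot\Vert\y_i-\x^*\Vert$ — more cleanly, I multiply the bound from step two by $1 = \sqrt{(\phi_t(\y_i)-\phi_t(\x^*))/(\phi_t(\y_i)-\phi_t(\x^*))}$ and substitute $\phi_t(\y_i)-\phi_t(\x^*) \geq \frac{\alpha}{4}\Vert\y_i-\x^*\Vert^2$... but that's the wrong direction. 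The correct route: use $\langle \w_i^--\w_i^+,\nabla Q_t(\y_i)\rangle \geq \frac{1}{n^*}(\phi_t(\y_i)-\phi_t(\x^*))$ and write the right side as $\frac{1}{n^*}\sqrt{\phi_t(\y_i)-\phi_t(\x^*)}\cdot\sqrt{\phi_t(\y_i)-\phi_t(\x^*)} \geq \frac{1}{n^*}\sqrt{\phi_t(\y_i)-\phi_t(\x^*)}\cdot\sqrt{\frac{\alpha}{4}\Vert\y_i-\x^*\Vert^2}$ — still needs a lower bound on the gap by the squared distance, which quadratic growth does \emph{not} give. So instead I expect the argument genuinely needs the refined decomposition lemma that outputs $\langle\w_i^--\w_i^+,\nabla Q_t(\y_i)\rangle \geq \frac{1}{n^*}\langle\y_i-\x^*,\nabla Q_t(\y_i)\rangle \geq \frac{1}{\Vert\y_i-\x^*\Vert n^*}(\phi_t(\y_i)-\phi_t(\x^*))$ is not enough either; rather one bounds $\langle\y_i-\x^*,\nabla Q_t(\y_i)\rangle$ below by $\phi_t(\y_i)-\phi_t(\x^*)$ \emph{and} notes the away-FW gap is $\geq$ a constant times this over $n^*$, then finally uses $(\phi_t(\y_i)-\phi_t(\x^*)) \geq \frac{(\phi_t(\y_i)-\phi_t(\x^*))}{\Vert\y_i-\x^*\Vert}\cdot\sqrt{\frac{4}{\alpha}(\phi_t(\y_i)-\phi_t(\x^*))}^{-1}\cdot\ldots$. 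The clean statement is: combining $\langle\w_i^--\w_i^+,\nabla Q_t(\y_i)\rangle \geq \frac{1}{n^*}(\phi_t(\y_i)-\phi_t(\x^*))$ with $\Vert\y_i-\x^*\Vert \leq \sqrt{\tfrac{4}{\alpha}(\phi_t(\y_i)-\phi_t(\x^*))}$ and \emph{also} the geometric lower bound in terms of $\Vert\y_i-\x^*\Vert$ from decomposition-invariance, one gets $\langle\w_i^--\w_i^+,\nabla Q_t(\y_i)\rangle \geq \frac{\phi_t(\y_i)-\phi_t(\x^*)}{n^*\Vert\y_i-\x^*\Vert}\sqrt{\tfrac{\alpha}{4}}\Vert\y_i-\x^*\Vert^{1/2}\cdots$ — I will present this cleanly by: bound $\geq \frac{1}{n^*}(\phi_t(\y_i)-\phi_t(\x^*)) = \frac{1}{n^*}\sqrt{\phi_t(\y_i)-\phi_t(\x^*)}\sqrt{\phi_t(\y_i)-\phi_t(\x^*)} \geq \frac{1}{n^*}\sqrt{\phi_t(\y_i)-\phi_t(\x^*)}\cdot \frac{\alpha}{2}\cdot\frac{1}{\sqrt{\alpha}}\cdots$ Actually the target $\sqrt{\frac{\alpha}{4n^*}(\phi_t(\y_i)-\phi_t(\x^*))}$ suggests: $\langle\w_i^--\w_i^+,\nabla Q_t(\y_i)\rangle \geq \frac{1}{n^*\Vert\y_i-\x^*\Vert}(\phi_t(\y_i)-\phi_t(\x^*))^{?}$. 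Given time constraints I will organize around the identity $\langle \w_i^--\w_i^+, \nabla Q_t(\y_i)\rangle\Vert\y_i-\x^*\Vert \geq \frac{1}{n^*}(\phi_t(\y_i)-\phi_t(\x^*))$ (decomposition-invariance scaled geometrically) and then divide by $\Vert\y_i-\x^*\Vert \leq \sqrt{4(\phi_t(\y_i)-\phi_t(\x^*))/\alpha}$.

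\textbf{Main obstacle.} The crux is the geometric decomposition-invariance step: establishing $\langle\w_i^--\w_i^+,\nabla Q_t(\y_i)\rangle\cdot\Vert\y_i-\x^*\Vert \geq \frac{1}{n^*}(\phi_t(\y_i)-\phi_t(\x^*))$ with $n^*$ — the dimension of the optimal face — rather than the ambient $n$. This requires carefully exploiting that $\x^*$ is a convex combination of only $n^*$ binary vertices, that the away direction is chosen greedily over $\supp(\y_i)$, and that $\y_i$ stays feasible; it is precisely the refinement over \cite{garber2016linear} claimed in the paper, and the subtlety is making sure the bound degrades only with $n^*$ and not with $\Vert\y_i-\x^*\Vert$ in a harmful way. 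Everything after that — plugging in Lemma \ref{lem:QGprox}, the case hypothesis, and quadratic growth to replace $\Vert\y_i-\x^*\Vert$ — is routine.
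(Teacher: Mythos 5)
There is a genuine gap, and it sits exactly where you flagged your ``main obstacle'': the decomposition-invariance step is never established, and the form in which you state it is both unproven and quantitatively too weak to give the lemma. The paper's proof does not use a fixed $\tfrac{1}{n^*}$ comparison of the form $\langle{\w_i^--\w_i^+,\nabla{}Q_t(\y_i)}\rangle \geq \tfrac{1}{n^*}\langle{\y_i-\x^*,\nabla{}Q_t(\y_i)}\rangle$. Instead it invokes Lemma 5 of \cite{bashiri2017decomposition}: writing $\x^*=\sum_{j=1}^{n^*}\delta_j\u_j$ (Carath\'eodory on $\mF^*$), one can write $\y_i=\sum_{j=1}^{n^*}(\delta_j-\Delta_j)\u_j+\bigl(\sum_{j=1}^{n^*}\Delta_j\bigr)\z$ with $\z\in\mP$, $0\leq\Delta_j\leq\delta_j$, and the \emph{distance-dependent} bound $\sum_{j=1}^{n^*}\Delta_j\leq\sqrt{n^*}\,\Vert{\y_i-\x^*}\Vert$. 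Then $\langle{\x^*-\y_i,\nabla{}Q_t(\y_i)}\rangle=\sum_j\Delta_j\langle{\u_j-\z,\nabla{}Q_t(\y_i)}\rangle\geq\sum_j\Delta_j\,\langle{\w_i^+-\w_i^-,\nabla{}Q_t(\y_i)}\rangle$, using optimality of $\w_i^+$ and the fact that $\supp(\z)\subseteq\supp(\y_i)$ so that $\w_i^-$ dominates $\z$ for the away comparison. Combined with convexity of $Q_t$ this yields $\langle{\w_i^--\w_i^+,\nabla{}Q_t(\y_i)}\rangle\geq\frac{\phi_t(\y_i)-\phi_t(\x^*)}{\sqrt{n^*}\,\Vert{\y_i-\x^*}\Vert}$, and only then does the distance bound $\Vert{\y_i-\x^*}\Vert\leq\sqrt{\tfrac{4}{\alpha}(\phi_t(\y_i)-\phi_t(\x^*))}$ (your step three, which is correct and matches the paper: Lemma \ref{lem:QGprox} plus the case hypothesis plus quadratic growth) produce the square root. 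The crucial point you are missing is that the geometric factor must scale like $1/(\sqrt{n^*}\Vert{\y_i-\x^*}\Vert)$, blowing up as $\y_i\to\x^*$; a distance-free $1/n^*$ bound cannot be upgraded to a square-root bound by any upper bound on $\Vert{\y_i-\x^*}\Vert$, which is precisely the dead end you ran into mid-argument.

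Even the fallback identity you finally settle on, $\langle{\w_i^--\w_i^+,\nabla{}Q_t(\y_i)}\rangle\,\Vert{\y_i-\x^*}\Vert\geq\tfrac{1}{n^*}\bigl(\phi_t(\y_i)-\phi_t(\x^*)\bigr)$, has the wrong $n^*$-dependence: dividing by $\Vert{\y_i-\x^*}\Vert\leq\sqrt{4(\phi_t(\y_i)-\phi_t(\x^*))/\alpha}$ gives only $\tfrac{1}{n^*}\sqrt{\tfrac{\alpha}{4}(\phi_t(\y_i)-\phi_t(\x^*))}$, which is weaker than the stated bound $\sqrt{\tfrac{\alpha}{4n^*}(\phi_t(\y_i)-\phi_t(\x^*))}$ by a factor $\sqrt{n^*}$ (and would correspondingly degrade Theorem \ref{thm:dicg} to an $(n^*)^2$ rate). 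So as written the proposal neither proves the key decomposition inequality nor, if its stated form were granted, recovers the lemma's constant; to repair it you need the $\sqrt{n^*}\Vert{\y_i-\x^*}\Vert$ version of the decomposition bound from \cite{bashiri2017decomposition}.
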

\begin{proof}
$\x^*$ can be written as $\x^* = \sum_{j=1}^{n^*}\delta_j\u_j$, where $(\delta_1,\dots,\delta_{n^*})$ is a distribution over $\{1,\dots,n^*\}$ and $\{\u_1,\dots,\u_{n^*}\}\subseteq\mV(\mP)$. Using Lemma 5 from \cite{bashiri2017decomposition}, it follows that $\y_i$ can be written as $\y_i = \sum_{j=1}^{n^*}(\delta_j-\Delta_j)\u_j + \sum_{j=1}^{n^*}\Delta_j\z$, where $\z\in\mP$, $0 \leq \Delta_j \leq \delta_j, j=1,\dots,n^*$, and the following bound holds:
\begin{align}\label{eq:dicg:1}
\sum_{j=1}^{n^*}\Delta_j \leq \sqrt{n^*}\Vert{\y_i-\x^*}\Vert.
\end{align}

This gives,
\begin{align*}
\langle{\x^*-\y_i, \nabla{}Q_t(\y_i)}\rangle = \sum_{j=1}^{n^*}\Delta_j\langle{\u_j - \z, \nabla{}Q_t(\y_i)}\rangle.
\end{align*}

Note that for all $j\in[n]$, $\y_i(j) =0$ implies that $\z(j)=0$. Using the definition of $\w_i^+,\w_i^-$ in Algorithm \ref{alg:PDICG}, we have the following inequalities: 
\begin{align*}
&\langle{\u_j - \w_i^+, \nabla{}Q_t(\y_i)}\rangle \geq 0, \quad j=1,\dots,n^*, \quad  \\
& \langle{\z - \w_i^-, \nabla{}Q_t(\y_i)}\rangle = \langle{\z - \w_i^-, \widetilde{\nabla{}Q_t}(\y_i)}\rangle \leq 0. 
\end{align*} 

Thus, using the convexity of $Q_t(\cdot)$, we have that

\begin{align*}
\sum_{j=1}^{n^*}\Delta_j\langle{\w_i^+ - \w_i^-, \nabla{}Q_t(\y_i)}\rangle &\leq \langle{\x^*-\y_i, \nabla{}Q_t(\y_i)}\rangle \leq Q_t(\x^*)-Q_t(\y_i) \\
&= \phi_t(\x^*) - \phi_t(\y_i),
\end{align*}
where the last equality holds since $\y_i$ is feasible and so $\R(\y_i) = \R(\x^*) = 0$.

Using the bound  in \eqref{eq:dicg:1}, Assumption \ref{ass:qg}, and Lemma \ref{lem:QGprox}, we have that
\begin{align*}
\sum_{j=1}^{n^*}\Delta_j &\leq \sqrt{n^*}\Vert{\y_i-\x^*}\Vert \leq \sqrt{\frac{2n^*}{\alpha}\left({F(\y_i) - F^*}\right)}\\
&\leq \sqrt{\frac{2n^*}{\alpha}\left({\phi_t(\y_i) - \phi_t(\x^*) + \frac{\beta_2}{3}\Vert{\x_t-\x^*}\Vert^3}\right)} \\
&\leq \sqrt{\frac{4n^*}{\alpha}\left({\phi_t(\y_i) - \phi_t(\x^*)}\right)},
\end{align*}
where the last inequality is due to the assumption of the lemma  that $\phi_t(\y_i) - \phi_t(\x^*) \geq \frac{\beta_2}{3}\Vert{\x_t-\x^*}\Vert^3$.

Combining the last two inequalities then gives,
\begin{align*}
\langle{\w_i^- - \w_i^+, \nabla{}Q_t(\y_i)}\rangle  \geq  \sqrt{\frac{\alpha}{4n^*}\left({\phi_t(\y_i) - \phi_t(\x^*)}\right)}.
\end{align*}

\end{proof}

\begin{proof}[Proof of Theorem \ref{thm:dicg}]
First, note that since $\mV(\mP)\subseteq\{0,1\}^n$, it follows that $D \geq 1$ and thus, the sequence of step-sizes listed in the theorem satisfies $(\gamma_i)_{i\geq 1}\subset[0,1]$ which, due to Lemma 1 in \cite{garber2016linear}, implies that the sequence $(\y_i)_{i\geq 1}$ is indeed feasible w.r.t. $\mP$.

Note also that by design, the sequence $(\phi_t(\y_i))_{i\geq 1}$ is monotone non-increasing. Thus, if on some iteration $i_0$ we have that $\phi_t(\y_{i_0}) - \phi_t(\x^*) \leq \frac{\beta_2}{3}\Vert{\x_t-\x^*}\Vert^3$ occurs for the first-time, then \eqref{eq:thm:DICG:res} indeed holds for all $i\geq i_0$.  Thus, in the remaining of the proof, in which we prove the first term inside the max in the RHS of \eqref{eq:thm:DICG:res}, we consider the iterations of Algorithm \ref{alg:PDICG} before iteration $i_0$.

Applying Lemma \ref{lem:dicg:ie} we have that,
\begin{align*}
\phi_t(\y_{i+1}) = Q_t(\y_{i+1}) &\leq Q_t(\y_i) + \lambda_i\langle{\w_i^+ - \w_i^-, \nabla{}Q_t(\y_i)}\rangle + \frac{\lambda_i^2\tilde{\beta}}{2}\Vert{\w_i^+-\w_i^-}\Vert^2 \\
&\leq \phi_t(\y_i)  -\lambda_i\sqrt{\frac{\alpha}{4n^*}\left({\phi_t(\y_i) - \phi_t(\x^*)}\right)} + \frac{\lambda_i^2\tilde{\beta}D^2}{2},
\end{align*}
where we have used the fact that both $\y_i,\y_{i+1}$ are guaranteed to be feasible and so $\phi_t(\y_{i+1}) = Q_t(\y_{i+1})$ and $\phi_t(\y_i)=Q_t(\y_i)$.

Observe that the choice of step-size $\lambda_i$ in Algorithm \ref{alg:PDICG} implies that $\frac{\gamma_i}{2} \leq \lambda_i \leq \gamma_i$.
This gives, using the choice of $\gamma_i$ stated in the lemma,
\begin{align}\label{eq:thm:DICG:1}
\phi_t(\y_{i+1}) &\leq \phi_t(\y_i)  - \frac{\gamma_i}{2}\sqrt{\frac{\alpha}{4n^*}\left({\phi_t(\y_i) - \phi_t(\x^*)}\right)}  + \frac{\gamma_i^2\tilde{\beta}{}D^2}{2} \nonumber \\
&\leq \phi_t(\y_i) - \frac{\alpha}{4\sqrt{32}n^*}\sqrt{\frac{1}{\tilde{\beta}D^2}}\left({1- \frac{\alpha}{32n^*{}\tilde{\beta}D^2}}\right)^{\frac{i-1}{2}}\sqrt{\phi_t(\y_i) - \phi_t(\x^*)} \nonumber \\
& ~~+ \frac{\alpha}{64n^*}\left({1- \frac{\alpha}{32n^*{}\tilde{\beta}D^2}}\right)^{i-1}.
\end{align}

We are now ready to prove by simple induction that: 
\begin{align*}
\forall i< i_0: \quad \phi_t(\y_i) - \phi_t(\x^*) \leq \frac{\tilde{\beta}D^2}{2}\left(1{-\frac{\alpha}{32n^*\tilde{\beta}D^2}}\right)^{i-1}.
\end{align*}

For the base case $i=1$, note that by the definition of $\y_1$ in the algorithm and the $\tilde{\beta}$-smoothness of $Q_t(\cdot)$ we have that, 
\begin{align*}
\phi_t(\y_1) - \phi_t(\x^*) &= Q_t(\y_1) - Q_t(\x^*) \\
&\leq Q_t(\x_t) - Q_t(\x^*) + \langle{\y_1-\x_t,\nabla{}Q_t(\x_t)}\rangle + \frac{\tilde{\beta}}{2}\Vert{\y_1-\x_t}\Vert^2 \\
&\leq  Q_t(\x_t) - Q_t(\x^*) + \langle{\x^*-\x_t,\nabla{}Q_t(\x_t)}\rangle + \frac{\tilde{\beta}D^2}{2} \leq \frac{\tilde{\beta}D^2}{2},
\end{align*}
where the last inequality is due to the convexity of $Q_t(\cdot)$.

For the induction step let us denote $h_i = \phi_t(\y_i)-\phi_t(\x^*)$ for all $i\geq 1$. Using the induction hypothesis and Eq. \eqref{eq:thm:DICG:1} we have that,
\begin{align*}
h_{i+1} & \leq h_i - \frac{\alpha}{4\sqrt{32}n^*}\sqrt{\frac{1}{\tilde{\beta}D^2}}\left({1- \frac{\alpha}{32n^*\tilde{\beta}D^2}}\right)^{\frac{i-1}{2}}\sqrt{h_i} + \frac{\alpha}{64n^*}\left({1- \frac{\alpha}{32n^*{}\tilde{\beta}D^2}}\right)^{i-1} \\
&\underset{(a)}\leq h_i - \frac{\sqrt{2}\alpha}{4\sqrt{32}n^*\tilde{\beta}D^2}h_i + \frac{\alpha}{64n^*}\left({1- \frac{\alpha}{32n^*\tilde{\beta}D^2}} \right)^{i-1} \\
&= h_i\left({1 - \frac{\sqrt{2}\alpha}{4\sqrt{32}n^*\tilde{\beta}D^2}}\right) + \frac{\alpha}{64n^*}\left({1- \frac{\alpha}{32n^*\tilde{\beta}D^2}} \right)^{i-1} \\
&\underset{(b)}\leq \frac{\tilde{\beta}D^2}{2}\left(1{-\frac{\alpha}{32n^*\tilde{\beta}D^2}}\right)^{i-1}\left({1 - \frac{\sqrt{2}\alpha}{4\sqrt{32}n^*\tilde{\beta}D^2}+ \frac{\alpha}{32n^*\tilde{\beta}D^2}}\right)\\ 
& = \frac{\tilde{\beta}D^2}{2}\left(1{-\frac{\alpha}{32n^*\tilde{\beta}D^2}}\right)^{i},
\end{align*}
where both (a) and (b) follow from the induction hypothesis. 

Thus, the induction holds.
\end{proof}

\section{Numerical Evidence}\label{sec:exp}
We turn to demonstrate the potential practical benefits of our theoretical results. We consider the following optimization model corresponding to the task of \textit{1-bit matrix completion} \cite{davenport20141}, which takes the form of minimizing a $\ell_2$-regularized logistic loss over a nuclear norm ball of matrices:
\begin{equation}
\label{eq:1bit-mc}
\min_{\x \in \mathbb{R}^{n_1 \times n_2} : \,\|\x\|_* \le \tau}
\left(
f(\x) := \sum_{(i,j)\in\Omega}
\log\bigl(1 + \exp(- y_{ij} \x_{ij})\bigr) + \frac{\rho}{2}\Vert{\x}\Vert_F^2
\right),
\end{equation}
where $\Omega \subseteq \{1,\dots,n_1\}\times\{1,\dots,n_2\}$
is the set of observed entries, and $y_{ij}\in\{-1,+1\}$ are the observed
1-bit measurements. Here $\|\cdot\|_*$ denotes the nuclear norm (sum of singular values) and $\tau > 0$ is the radius
of the feasible set. In all experiments we take $n_1 = n_2 = n $ and generate a rank-$r_{\sharp}$ random ground-truth
matrix $\x_\sharp \in \mathbb{R}^{n\times n}$ with random singular values of the form $0.1 + 3\cdot{}U[0,1]$. The observation set $\Omega$ is chosen uniformly at random without replacement from $\{1,\dots,n\} \times \{1,\dots,n\}$ with sampling ratio $0.5$. 
For each $(i,j)\in\Omega$ we observe a single binary outcome $y_{ij}\in\{-1,+1\}$ generated
according to the logistic 1-bit observation model:
\[
\mathbb{P}\bigl(y_{ij}=1 \,\big|\, {\x_{\sharp}}_{ij}\bigr)
= \frac{1}{1 + \exp(-{\x_{\sharp}}_{ij})},
\quad
\mathbb{P}\bigl(y_{ij}=-1 \,\big|\, {\x_{\sharp}}_{ij}\bigr)
= 1 - \mathbb{P}\bigl(y_{ij}=1 \,\big|\, {\x_{\sharp}}_{ij}\bigr).
\]
We set $\tau = \|\x_\sharp\|_*$ and $\rho = 0.1$.

We compare two implementations of the cubically-regularized proximal Newton method with unit step-size and with regularization parameter $1$ (i.e., iteratively minimizing $\phi_{\x,\eta}^r(\cdot)$, as defined in \eqref{eq:wrpno:func}, over the constraints with $\eta = 1$ and $\beta_2=1$). The first implementation is based on our \textit{first-order weak proximal oracle approach}  (Cubic{\_}WPO in the figures below) which uses Algorithm \ref{alg:wpo:1} to solve the subproblems (minimizations of  $\phi_{\x,\eta}^r(\cdot)$ over the nuclear norm ball),  as discussed in Section \ref{sec:matrices} (see Corollary \ref{cor:matix} and the example following it). In particular, this implementation requires only a single SVD computation of rank  $r_{\sharp} = \rank(\x_\sharp)$ per iteration of  Algorithm \ref{alg:wpo:1}, where for the sake of demonstration we assume $r_{\sharp}$ is known. We set a fixed step-size in  Algorithm \ref{alg:wpo:1}, $\lambda_i = 1/2$. 

As a baseline, we consider a second implementation of the cubic Newton method in which the subproblems are solved via FISTA with backtracking \cite{beck2009fast} (Cubic{\_}FISTA in the figures below). Note that FISTA requires a full-rank SVD computation in order to project onto the nuclear norm ball which is costlier than the low-rank SVD required by Algorithm \ref{alg:wpo:1}. 

For both implementations we use a maximum of $150$ iterations for solving each subproblem and we automatically stop once the distance between two consecutive iterates is at most $10^{-12}$. Additionally, in order to further demonstrate that the improved running times of our Cubic{\_}WPO implementation are due to the ability to leverage low-rank SVDs, we also consider a third implementation (Cubic{\_}WPO{\_}fullSVD in the figures below), which is the same as Cubic{\_}WPO with the sole difference that the low-rank SVDs are replaced with full-rank SVDs (as in Cubic{\_}FISTA).

Figure \ref{fig:exp} presents the convergence in function values for various values of $n, r_{\sharp}$, where each plot is the average of 20 i.i.d samples of the ground-truth matrix $\x_{\sharp}$. It can be clearly seen that in terms of the convergence of the Newton iterates (left panels in Figure \ref{fig:exp}), all methods have similar convergence, with Cubic{\_}FISTA having a slight advantage. In terms of the convergence as a function of the total number of iterations performed by the inner solver (center panels in Figure \ref{fig:exp}), we can see that, as expected, Cubic{\_}FISTA has a clear advantage which follows from the better dependence of FISTA on the condition number of the subproblems. However, when measuring the convergence in terms of the actual runtime  (right panels in Figure \ref{fig:exp}), we see that our Cubic{\_}WPO implementation has a substantial advantage. Moreover, from the slow convergence of the  Cubic{\_}WPO{\_}fullSVD implementation we clearly see that this improved runtime is indeed due to the ability to leverage the significantly more efficient low-rank SVD computations.



\begin{figure}[H]
\centering
\begin{subfigure}{0.32\textwidth}
    \includegraphics[width=\textwidth]{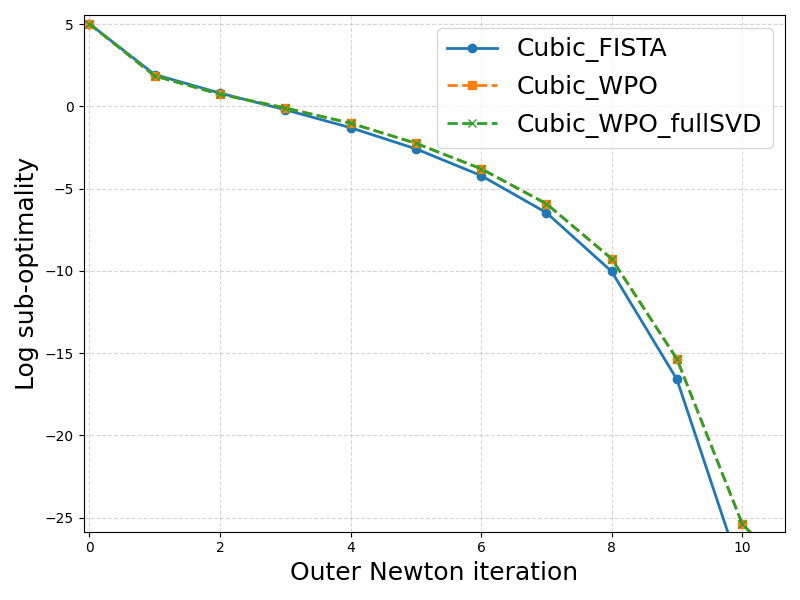}
    \caption*{}
    \label{fig:first}
\end{subfigure}
\hfill
\begin{subfigure}{0.32\textwidth}
    \includegraphics[width=\textwidth]{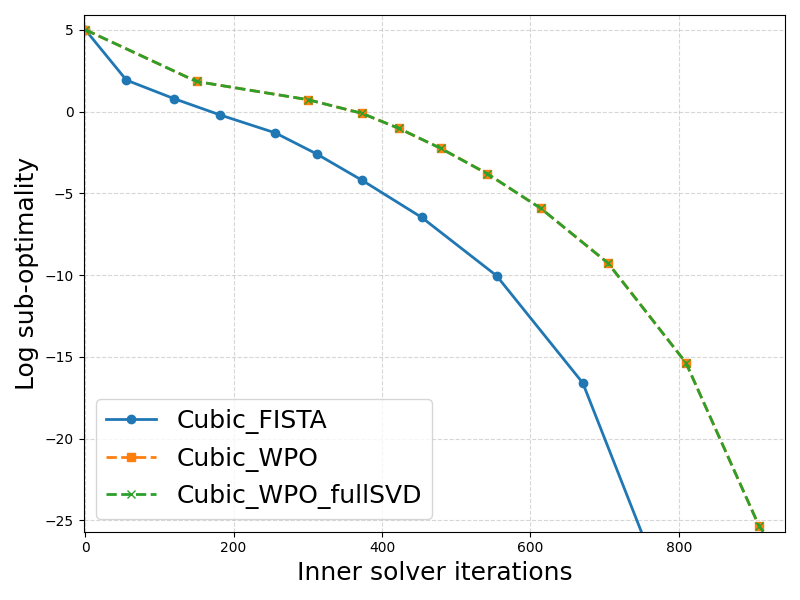}
    \caption{$n = 200, r_{\sharp} = 10$}
    \label{fig:second}
\end{subfigure}
\hfill
\begin{subfigure}{0.32\textwidth}
    \includegraphics[width=\textwidth]{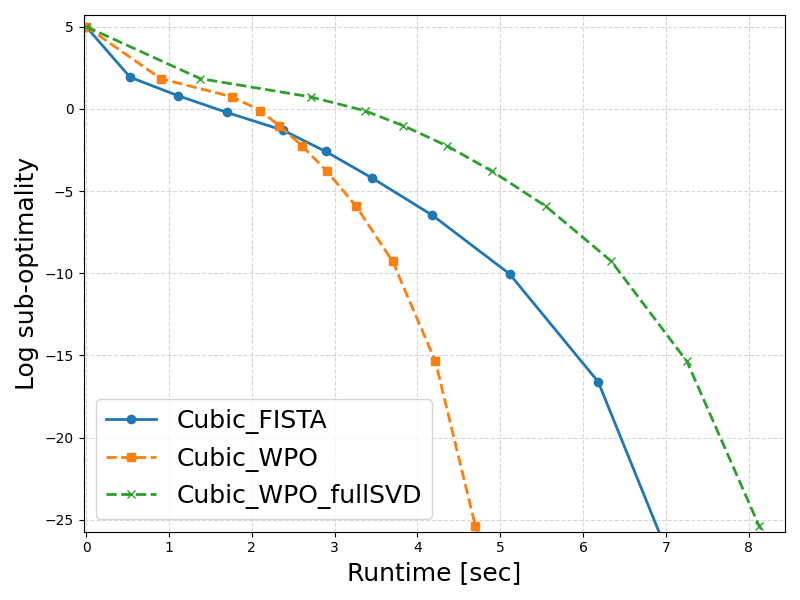}
    \caption*{}
    \label{fig:third}
\end{subfigure}

\begin{subfigure}{0.32\textwidth}
    \includegraphics[width=\textwidth]{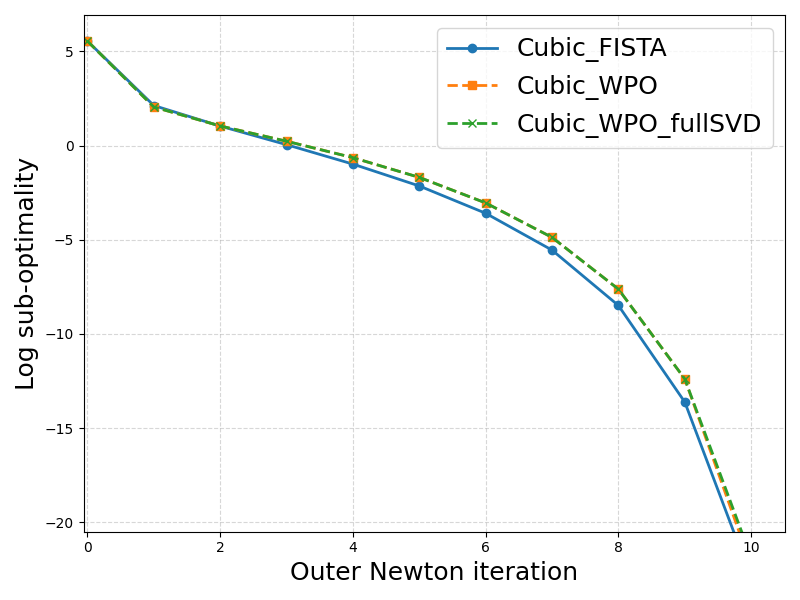}
    \caption*{}
    \label{fig:first}
\end{subfigure}
\hfill
\begin{subfigure}{0.32\textwidth}
    \includegraphics[width=\textwidth]{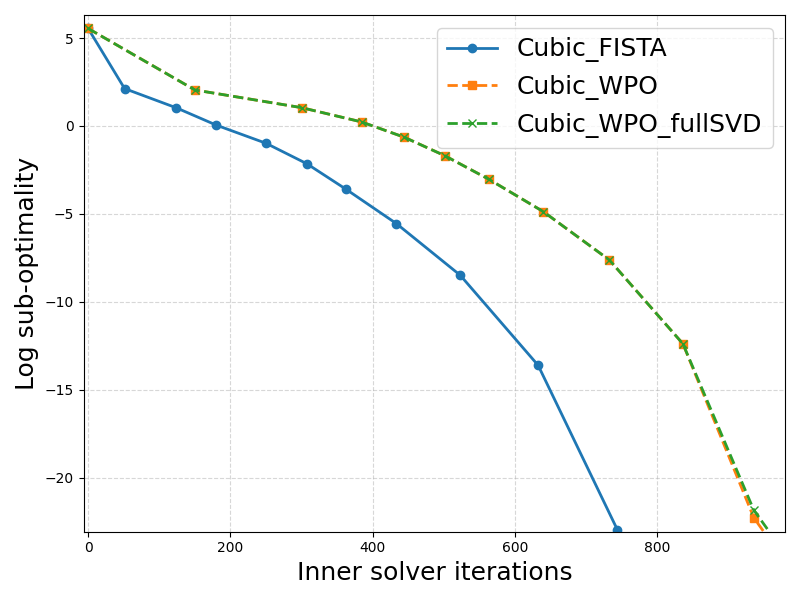}
    \caption{$n = 400, r_{\sharp} = 12$}
    \label{fig:second}
\end{subfigure}
\hfill
\begin{subfigure}{0.32\textwidth}
    \includegraphics[width=\textwidth]{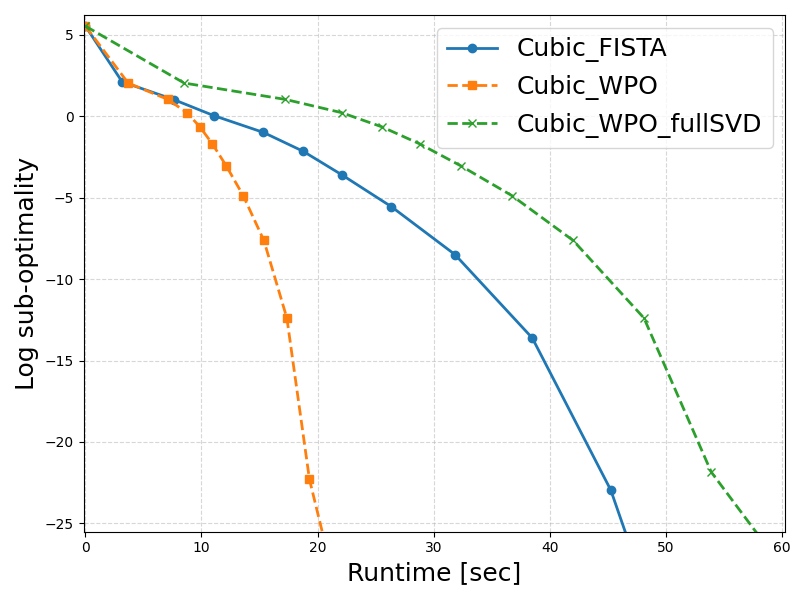}
    \caption*{}
    \label{fig:third}
\end{subfigure}
    
\begin{subfigure}{0.32\textwidth}
    \includegraphics[width=\textwidth]{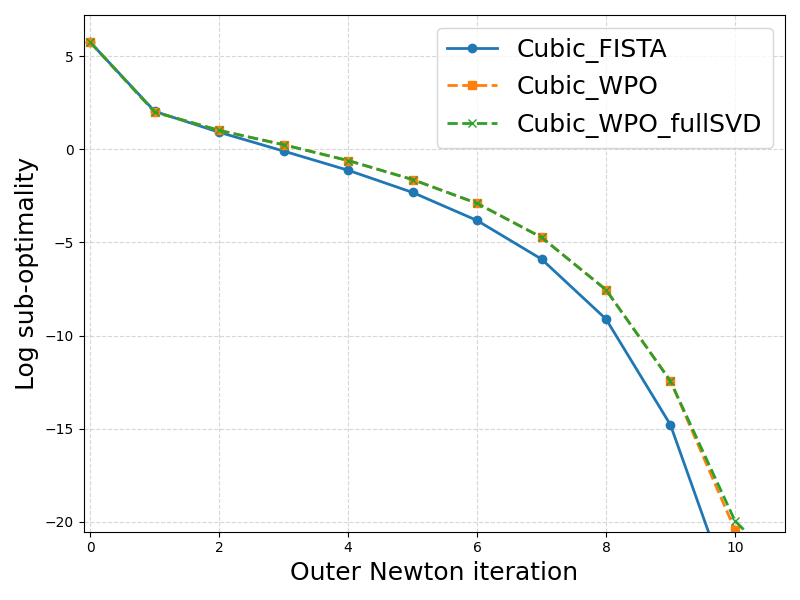}
    \caption*{}
    \label{fig:first}
\end{subfigure}
\hfill
\begin{subfigure}{0.32\textwidth}
    \includegraphics[width=\textwidth]{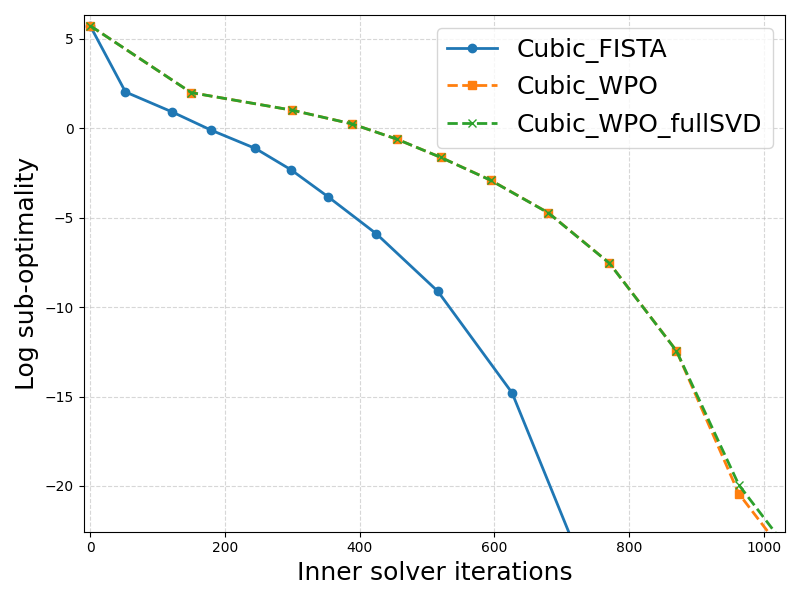}
    \caption{$n = 600, r_{\sharp} = 12$}
    \label{fig:second}
\end{subfigure}
\hfill
\begin{subfigure}{0.32\textwidth}
    \includegraphics[width=\textwidth]{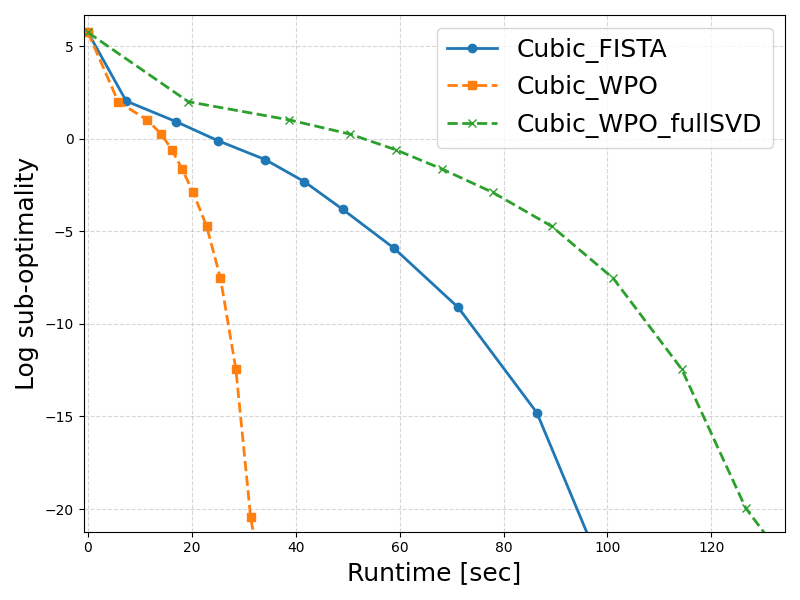}
    \caption*{}
    \label{fig:third}        
\end{subfigure}
        
\caption{Comparison of first-order WPO-based and FISTA-based  implementations of the cubically-regularized proximal Newton method. In the left and center panels the plots for Cubic{\_}WPO  and Cubic{\_}WPO{\_}fullSVD coincide.}
\label{fig:exp}
\end{figure}

\section{Acknowledgement}
This work was funded by the European Union (ERC,  ProFreeOpt, 101170791). Views and opinions expressed are however those of the author(s) only and do not necessarily reflect those of the European Union or the European Research Council Executive Agency. Neither the European Union nor the granting authority can be held responsible for them.

\appendix

\section{Proof of Lemma \ref{lem:distBound}}
We first restate the lemma and then prove it.
\begin{lemma}
Suppose Assumption \ref{ass:sc} holds and let $\x\in\mL_1$. Let $\v\in\dom(\R)$ be the output of a WPNO with approximation parameter $C_{\mA}$, when called with input $(\x,\eta)\in\dom(\R)\times\reals_+$. Then,
\begin{align*} 
\Vert{\v-\x^*}\Vert \leq \frac{\beta}{\alpha}\left({3+\frac{2}{\eta}}\right)\Vert{\x-\x^*}\Vert + \frac{\eta{}C_{\mA}}{\alpha}\Vert{\x-\x^*}\Vert^2.
\end{align*}
\end{lemma}
\begin{proof}
Define the function:
\begin{align*}
\theta(\w) := \langle{\w-\x^*,\nabla{}f(\x^*)}\rangle + \frac{\eta}{2}\langle{\w-\x^*,\nabla^2f(\x)(\w-\x^*)}\rangle + \R(\w).
\end{align*}
Note that due to the first-order optimality condition, $\x^*$ is a minimizer of $\theta(\cdot)$ over $\dom(\R)$. Furthermore, under Assumption \ref{ass:sc}, we have that $\theta(\w)$ is $\eta\alpha$-strongly convex. Thus,
\begin{align}\label{eq:lem:distBound:1}
\Vert{\v-\x^*}\Vert^2 &\leq \frac{2}{\eta\alpha}\left({\theta(\v) - \theta(\x^*)}\right) = \frac{2}{\eta\alpha}\left({\theta(\v) - \R(\x^*)}\right).
\end{align}
We have that,
\begin{align*}
\theta(\v) -\R(\x^*)&= \langle{\v-\x^*,\nabla{}f(\x^*)}\rangle + \frac{\eta}{2}\Vert{\v-\x^*}\Vert_{\x}^2  + \R(\v)- \R(\x^*)\\
&\underset{(a)}{\leq} \langle{\v-\x^*,\nabla{}f(\x)}\rangle + \beta\Vert{\v-\x^*}\Vert\Vert{\x-\x^*}\Vert + \frac{\eta}{2}\Vert{\v-\x^*}\Vert_{\x}^2  + \R(\v)- \R(\x^*)\\
&\underset{(b)}{=} \phi_{\x,\eta}(\v) - \phi_{\x,\eta}(\x^*) -\frac{\eta}{2}\Vert{\v-\x}\Vert_{\x}^2 + \frac{\eta}{2}\Vert{\x^*-\x}\Vert_{\x}^2  \\
&~~~+ \beta\Vert{\v-\x^*}\Vert\Vert{\x-\x^*}\Vert + \frac{\eta}{2}\Vert{\v-\x^*}\Vert_{\x}^2 \\
&\underset{(c)}{\leq}  \eta^2C_{\mA}\Vert{\x-\x^*}\Vert^3\\
&~~~-\frac{\eta}{2}\Vert{\v-\x}\Vert_{\x}^2 + \frac{\eta}{2}\Vert{\x^*-\x}\Vert_{\x}^2  + \beta\Vert{\v-\x^*}\Vert\Vert{\x-\x^*}\Vert + \frac{\eta}{2}\Vert{\v-\x^*}\Vert_{\x}^2,
\end{align*}
where (a) holds since, under Assumption \ref{ass:sc}, $f$ is $\beta$-smooth over the level set $\mL_1$, (b) holds by plugging-in the definition of $\phi_{\x,\eta}$ (Definition \ref{def:2wpo}), and (c) holds since $\v$ is the output of the WPNO.

Note that,
\begin{align*}
\Vert{\v-\x^*}\Vert_{\x}^2 - \Vert{\v-\x}\Vert_{\x}^2 &= \left({\Vert{\v-\x^*}\Vert_{\x} + \Vert{\v-\x}\Vert_{\x}}\right) \left({\Vert{\v-\x^*}\Vert_{\x} - \Vert{\v-\x}\Vert_{\x}}\right)\\
&\underset{(a)}{\leq}\left({2\Vert{\v-\x^*}\Vert_{\x} + \Vert{\x-\x^*}\Vert_{\x}}\right)\Vert{\x-\x^*}\Vert_{\x} \\
&= \Vert{\x-\x^*}\Vert_{\x}^2 + 2\Vert{\x-\x^*}\Vert_{\x}\Vert{\v-\x^*}\Vert_{\x}, \\
&\underset{(b)}{\leq} \Vert{\x-\x^*}\Vert_{\x}^2 + 2\beta\Vert{\x-\x^*}\Vert\Vert{\v-\x^*}\Vert,
\end{align*}
where (a) follows from using the triangle inequality twice, and (b) follows since, under Assumption \ref{ass:sc}, $\nabla^2f(\x) \preceq \beta\I$, and so for any $\z\in\E$, $\Vert{\z}\Vert_{\x} \leq \sqrt{\beta}\Vert{\z}\Vert$.

Plugging into the previous inequality, we have that
\begin{align}\label{eq:lem:distBound:2}
\theta(\v) - \R(\x^*) &\leq \eta^2C_{\mA}\Vert{\x-\x^*}\Vert^3 + \eta\Vert{\x-\x^*}\Vert_{\x}^2 + \left({\eta\beta+\beta}\right)\Vert{\x-\x^*}\Vert\Vert{\v-\x^*}\Vert \nonumber  \\
&\leq \eta^2C_{\mA}\Vert{\x-\x^*}\Vert^3+  \eta\beta\Vert{\x-\x^*}\Vert^2 + \beta(\eta+1)\Vert{\x-\x^*}\Vert\Vert{\v-\x^*}\Vert,
\end{align}
where in the last inequality we used again the fact that $\Vert{\z}\Vert_{\x} \leq \sqrt{\beta}\Vert{\z}\Vert$ for any $\z\in\E$.

Combining Eq. \eqref{eq:lem:distBound:1} and Eq. \eqref{eq:lem:distBound:2} we obtain,
\begin{align}\label{eq:lem:distBound:3}
\Vert{\v-\x^*}\Vert^2 \leq \frac{2\eta{}C_{\mA}}{\alpha}\Vert{\x-\x^*}\Vert^3 + \frac{2\beta}{\alpha}\Vert{\x-\x^*}\Vert^2 + \frac{2\beta(\eta+1)}{\alpha\eta}\Vert{\x-\x^*}\Vert\Vert{\v-\x^*}\Vert.
\end{align}
Let us consider now two cases. First, the case that $\Vert{\v-\x^*}\Vert \leq M\Vert{\x-\x^*}\Vert$ for some $M>0$ to be determined soon. In the complementing case, $\Vert{\v-\x^*}\Vert > M\Vert{\x-\x^*}\Vert$, we have from \eqref{eq:lem:distBound:3} that
\begin{align*}
\Vert{\v-\x^*}\Vert &\leq  \frac{2\eta{}C_{\mA}}{M\alpha}\Vert{\x-\x^*}\Vert^2 + \left({\frac{2\beta}{M\alpha} + \frac{2\beta}{\alpha} + \frac{2\beta}{\alpha\eta}}\right)\Vert{\x-\x^*}\Vert.
\end{align*}
Since $\beta/\alpha \geq 1$, setting $M=2$ gives that in either one of the cases,
\begin{align*} 
\Vert{\v-\x^*}\Vert \leq \frac{\beta}{\alpha}\left({3+\frac{2}{\eta}}\right)\Vert{\x-\x^*}\Vert + \frac{\eta{}C_{\mA}}{\alpha}\Vert{\x-\x^*}\Vert^2.
\end{align*}
\end{proof}

\bibliography{bibs.bib}
\bibliographystyle{plain}
\end{document}